\newtheorem{Theorem}{Theorem}[section]
\newtheorem{Proposition}[Theorem]{Proposition}
\newtheorem{Lemma}[Theorem]{Lemma}
\theoremstyle{definition}
\newtheorem{Hypothesis}{Hypothesis}[section]
\theoremstyle{remark}
\numberwithin{equation}{section}
\newcommand{\Z}{{\mathbb Z}}
\newcommand{\R}{{\mathbb R}}
\newcommand{\C}{{\mathbb C}}
\begin{document}

\title[Reflectionless operators]{The Marchenko representation of reflectionless Jacobi and Schr\"odinger operators}
\author{Injo Hur}

\address{Mathematics Department\\
University of Oklahoma\\
Norman, OK 73019}

\email{ihur@math.ou.edu}

\author{Matt McBride}

\address{Mathematics Department\\
University of Oklahoma\\
Norman, OK 73019}

\email{mmcbride@math.ou.edu}

\author{Christian Remling}

\address{Mathematics Department\\
University of Oklahoma\\
Norman, OK 73019}

\email{cremling@math.ou.edu}

\urladdr{www.math.ou.edu/$\sim$cremling}

\date{January 28, 2014}

\thanks{2010 {\it Mathematics Subject Classification.} Primary 34L40 47B36 81Q10}

\keywords{Jacobi matrix, Schr\"odinger operator, reflectionless}

\thanks{CR's work has been supported
by NSF grant DMS 0758594}
\begin{abstract}
We consider Jacobi matrices and Schr\"odinger operators that are reflectionless on an interval.
We give a systematic development of a certain parametrization of this class, in terms of suitable spectral data,
that is due to Marchenko. Then some applications of these ideas are discussed.
\end{abstract}
\maketitle
\section{Introduction}
We are interested in one-dimensional Schr\"odinger operators,
\begin{equation}
\label{se}
(Hy)(x) = -y''(x) + V(x) y(x) ,
\end{equation}
with locally integrable potentials $V$ that are in the limit point case at $\pm\infty$
and in Jacobi matrices,
\begin{equation}
\label{jac}
(Ju)_n = a_n u_{n+1} + a_{n-1}u_{n-1} + b_n u_n .
\end{equation}
Here we assume that $a,b\in\ell^{\infty}(\Z)$, $a_n >0$, $b_n\in\R$.

These operators have associated half line $m$ functions $m_{\pm}$. These are \textit{Herglotz functions,}
that is, they map the upper half plane $\C^+$ holomorphically to itself. (The precise definitions of $m_{\pm}$
for Schr\"odinger and Jacobi operators will be reviewed below.)

Recall that we call an operator \textit{reflectionless }on a Borel set $S\subset\R$ of positive Lebesgue measure
if $m_{\pm}$ satisfy the following identity
\begin{equation}
\label{defrefl}
m_+(x) = -\overline{m_-(x)} \quad\quad \textrm{for (Lebesgue) a.e.\ } x\in S .
\end{equation}
Reflectionless operators are important because they can be thought
of as the fundamental building blocks of arbitrary operators with some absolutely continuous
spectrum. See \cite{Kotac,Remcont,Remac}.
Reflectionless operators have remarkable properties, and if an operator is
reflectionless on an \textit{interval }(rather than a more complicated set), one can say
even more. So these operators are of special interest.

Marchenko \cite{Mar} developed a certain parametrization of the class $\mathcal M_R$ of Schr\"odinger operators
$H$ that are reflectionless on $(0,\infty)$ and have spectrum contained in $[-R^2,\infty)$ (we are paraphrasing,
Marchenko does not emphasize this aspect, and his goals are different from ours).
It is in fact easy in principle to give such a parametrization in terms of certain spectral data, which
has been used by many authors \cite{Craig,PR1,PR2,Remac,Remtop,Teschl}. We will briefly
review this material in Sect.~2. Marchenko's parametrization is different, and it makes
certain properties of reflectionless Schr\"odinger operators very transparent. Some of these applications will be discussed below.
For a rather different application of Marchenko's parametrization, see \cite{KotKdV}, where this material
is used to construct the KdV flow on $\mathcal M_R$.

We have two general goals in this paper. First, we present a direct and easy
approach to Marchenko's parametrization that starts from scratch and does not use any machinery.
Marchenko's treatment relies on inverse scattering theory as its main
tool (which then needs to be combined with a limiting process, as most reflectionless operators do not fall under
the scope of classical scattering theory) and is rather intricate. 
We hope that our approach will help put things in their proper context;
among other things, it will explain the role of
the inequalities imposed on the representing measures $\sigma$.
We will also extend these ideas to the discrete setting; in fact, we will start with this case as some technical issues
from the continuous setting are absent here.
The second goal is to explore some consequences and applications of Marchenko's parametrization, in the form
developed here. We will have more to say about this towards the end of this introduction.

For the precise statement of the Marchenko parametrizations, we refer the reader to Theorems \ref{Tdisc}, \ref{Tcont} below.
However, the basic ideas are easy to describe. If $S$ is an interval, then it's well known (compare, for example,
\cite[Corollary 2]{Kot85}) that \eqref{defrefl}
guarantees the existence of a genuine holomorphic continuation of $m_+$ through $S$ (this is not an immediate
consequence of the Schwarz reflection principle because of the possible presence of an exceptional set where \eqref{defrefl} fails).
More precisely, we have the following (the proof will be reviewed in Sect.\ 2).
\begin{Lemma}
\label{L2.1}
Fix an open interval $S=(a,b)$, and let $m_+$ be a Herglotz function. Then
$m_+$ satisfies \eqref{defrefl} for $S=(a,b)$ (for some Herglotz function $m_-$) if and only if $m_+$ has a holomorphic continuation
\[
M: \C^+\cup S \cup \C^-\to\C^+ .
\]
\end{Lemma}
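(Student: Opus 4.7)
The plan is to prove both implications by gluing $m_+$ with a suitably reflected version of $m_-$, reducing the nontrivial direction to a standard Morera-type statement via a Cayley transform. For the easy direction, given a holomorphic continuation $M:\C^+\cup S\cup\C^-\to\C^+$, I would set $m_-(z)=-\overline{M(\overline{z})}$ for $z\in\C^+$. Since $\overline{z}\in\C^-$ implies $M(\overline{z})\in\C^+$, we get $m_-(z)\in\C^+$, so $m_-$ is Herglotz. Taking boundary values at a point $x\in S$ yields $m_+(x)=M(x)$ and $m_-(x)=-\overline{M(x)}$, hence $m_+(x)=-\overline{m_-(x)}$, so \eqref{defrefl} holds (in fact pointwise on $S$, not just a.e.).

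For the converse, I would mirror the construction and define
\[
F(z)=\begin{cases} m_+(z), & z\in\C^+, \\ -\overline{m_-(\overline{z})}, & z\in\C^-, \end{cases}
\]
so that $F$ is holomorphic on $\C^+\cup\C^-$ and maps each half-plane into $\C^+$. The hypothesis \eqref{defrefl} is precisely the assertion that the boundary values of $F$ from the two sides agree a.e.\ on $S$. The task is to extend $F$ holomorphically across $S$.

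The main obstacle is that $F$ itself may fail to be locally bounded near $S$, so Morera's theorem cannot be applied directly. I would sidestep this with the Cayley transform
\[
G(z)=\frac{F(z)-i}{F(z)+i},
\]
which, because $F$ maps into $\C^+$, takes values in the open unit disk. The boundary values of $G$ inherit the a.e.\ agreement on $S$. A standard Morera argument---apply Cauchy's theorem to rectangles with horizontal edges at heights $\pm\epsilon$, let $\epsilon\to 0^+$ using boundedness and dominated convergence, and use $G^+=G^-$ a.e.\ on $S$ to cancel the two limiting integrals---produces a holomorphic extension of $G$ to $\C^+\cup S\cup\C^-$. Inverting the Cayley transform gives the candidate $M=i(1+G)/(1-G)$; it remains to verify that $|G|<1$ on all of $\C^+\cup S\cup\C^-$. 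If instead $|G(x_0)|=1$ for some $x_0\in S$, then $|G|$ would attain an interior maximum on this connected open set, forcing $G$ to be a unimodular constant and $m_+$ to be a real constant, contradicting the fact that $m_+$ is a nondegenerate Herglotz function. Hence $M$ is holomorphic on the full domain and $\operatorname{Im} M>0$ throughout, as required.
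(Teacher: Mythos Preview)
Your proof is correct and takes a genuinely different route from the paper's. The paper argues through the Herglotz machinery developed in Section~2: it forms $H=m_++m_-$, observes that its Krein function equals $1/2$ a.e.\ on $S$, and then uses the exponential representation \eqref{expH} together with the explicit antiderivative of $\tfrac{1}{2}\int_a^b (t-z)^{-1}\,dt$ to see that $H$ continues holomorphically through $S$; the representation \eqref{2.1} then transfers this to $m_+$, and the open mapping theorem upgrades $\textrm{Im}\,M\ge 0$ on $S$ to strict inequality. Your argument bypasses all of this: the Cayley transform makes $G$ bounded, so a Morera/Weyl-lemma gluing across $S$ is available once the nontangential boundary values match, and maximum modulus replaces the open mapping step. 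Your approach is more elementary and self-contained, and in fact proves the more general statement that any two bounded holomorphic functions on $\C^\pm$ with a.e.\ matching boundary values on an interval glue holomorphically. The paper's approach, on the other hand, yields structural information that is reused later (notably that the representing measure $\rho$ of $H$ has no singular part on $S$, which feeds into \eqref{2.1} and the calculations in Sections~3--4). One small remark: your Morera sketch tacitly uses Fatou's theorem for bounded analytic functions to get a.e.\ nontangential boundary values of $G$ and then dominated convergence to pass to the limit in the horizontal-edge integrals; this is routine but worth stating, since standard Morera requires continuity and one is really invoking Weyl's lemma (or the continuity of a primitive) to finish.
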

Note that there are two conditions really: $m_+$ must have a continuation $M$ to $\Omega=\C^+\cup S\cup\C^-$,
and, moreover, $M$ must map all of $\Omega$ to $\C^+$.
However, these properties are immediate consequences
of the fact that if $S=(a,b)$, then the exceptional null set from \eqref{defrefl} is empty, so this is what the Lemma really says.

This continuation $M$ is necessarily given by $M(z)=-\overline{m_-(\overline{z})}$ on the lower half plane $z\in\C^-$. In other
words, \eqref{defrefl} for $S=(a,b)$ lets us combine $m_+$ and $m_-$ into one holomorphic function $M$ on the
simply connected domain $\Omega$. We can then introduce a conformal change of variable $z=\varphi(\lambda)$,
$\varphi:\C^+\to\Omega$, to obtain a new Herglotz function $F(\lambda)\equiv M(\varphi(\lambda))$. The measures from the
Herglotz representations of these functions $F$ will be the data that we will use to
parametrize the operators from the Marchenko class $\mathcal M_R$.

Let us now discuss some applications.
As an immediate minor pay-off, we obtain a very quick new proof of \cite[Theorem 1.2]{RemDR}, which is now seen to be
an immediate consequence of our Theorem \ref{Tdisc} below. Recall that this result states that if a Jacobi matrix is bounded and
reflectionless on $(-2,2)$, then $a_n\ge 1$ for all $n\in\Z$, and if $a_{n_0}=1$ for a single $n_0\in\Z$, then $a_n\equiv 1$, $b_n\equiv 0$.
In Proposition \ref{P3.11} we try to indicate how these ideas could, perhaps, be carried further.

More importantly,
the material from Sect.~4 yields continuous analogs of these results. Here are three such
consequences of the Marchenko parametrization, combined with the material from \cite{Remcont}.
We are now interested in half line Schr\"odinger operators $H_+$ on $L^2(0,\infty)$ satisfying the following assumptions:
\begin{Hypothesis}
\label{H1.1}
$\Sigma_{ac}(H_+) \supset (0,\infty)$ and $V$ is uniformly locally integrable, that is,
\begin{equation}
\label{Vlocint}
\sup_{x\ge 0} \int_x^{x+1} |V(t)|\, dt < \infty .
\end{equation}
\end{Hypothesis}
Here, $\Sigma_{ac}$ denotes an essential support of the absolutely continuous part of the spectral measure
of $H_+$. In other words, we are assuming that $\chi_{(0,\infty)}(E)\, dE\ll  d\rho_{ac}(E)$. This implies that, but is not equivalent to
$\sigma_{ac}(H_+)\supset [0,\infty)$.
An $H_+$ satisfying Hypothesis \ref{H1.1} can, of course, have embedded singular spectrum in $(0,\infty)$, and can have
arbitrary spectrum outside this set.

Notice also that \eqref{Vlocint} implies that $H_+$ is bounded below.

To obtain self-adjoint operators, one has to impose a boundary condition at $x=0$, but since $\Sigma_{ac}$
is independent of this boundary condition, we won't make it explicit here.

Let us now state three (closely related) sample results.
\begin{Theorem}
\label{T1.1}
Assume Hypothesis \ref{H1.1}. Then
\begin{equation}
\label{1.6}
\limsup_{x\to\infty} \int V(x+t)\varphi (t) \, dt \le 0
\end{equation}
for every compactly supported, continuous function $\varphi\ge 0$.
\end{Theorem}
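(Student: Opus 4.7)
The plan is to combine the Marchenko parametrization (developed in Section~4) with the structural result from \cite{Remcont} on right limits of half-line operators with prescribed absolutely continuous spectrum. I would argue by contradiction.

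Suppose, for some $\varphi\in C_c(\R)$ with $\varphi\ge 0$, that there is a sequence $x_n\to\infty$ with $\int V(x_n+t)\varphi(t)\,dt \to c>0$. The uniform bound \eqref{Vlocint} makes the shifts $V_n(\cdot)=V(x_n+\cdot)$ a bounded family in $L^1_{\text{loc}}(\R)$, so after extracting a subsequence I may assume $V_n\to V_\infty$ in a sense that preserves integration against compactly supported continuous test functions (for instance weakly in $L^1_{\text{loc}}$). Then $\int V_\infty(t)\varphi(t)\,dt = c$.

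By the main result of \cite{Remcont}, under Hypothesis~\ref{H1.1} this $V_\infty$ corresponds to a whole-line Schr\"odinger operator which is reflectionless on $(0,\infty)$; the uniform lower bound on the spectrum coming from \eqref{Vlocint} confines the spectrum to some fixed $[-R^2,\infty)$. Hence the limiting operator lies in $\mathcal{M}_R$. It then suffices to establish the following $\mathcal{M}_R$-level claim: for any $V_\infty$ whose whole-line operator is in $\mathcal{M}_R$, $\int V_\infty(t)\varphi(t)\,dt \le 0$ for every nonnegative $\varphi\in C_c(\R)$. Given the claim, $c>0$ is contradicted and we are done. This claim is expected to fall out of the Marchenko parametrization directly: the reconstruction of $V_\infty$ from its parametrizing measure $\sigma$, combined with the sign/positivity inequalities that $\sigma$ must satisfy (cf.\ the introductory discussion on the role of these inequalities), should encode precisely the required averaged nonpositivity of the potential.

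The hard part is precisely this last step. Pointwise nonpositivity of $V_\infty$ is too much to ask in general, since operators in $\mathcal{M}_R$ may have nontrivial spectrum in $[-R^2,0]$ and correspondingly oscillatory potentials; but the Marchenko data still constrain $V_\infty$ strongly enough to give the averaged bound, and extracting this bound quantitatively from the explicit formula relating $V_\infty$ to $\sigma$ is the real content. A secondary, more technical point is choosing a topology for right limits which is both strong enough to pass to the limit in $\int V(x_n+t)\varphi(t)\,dt$ and compatible with the convergence hypotheses of the Remling theorem; uniform local integrability should make this routine.
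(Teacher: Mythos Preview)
Your overall architecture is exactly the paper's: argue by contradiction, pass to a right limit using compactness and the convergence from \cite{Remcont}, and then invoke a structural fact about potentials in $\mathcal M_R$ to rule out a positive limiting average. The topology issue you flag is handled in the paper precisely as you suggest, via the metric $d$ of \cite{Remcont}, whose convergence is equivalent to \eqref{4.21}.

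Where you go astray is in your assessment of the ``hard part.'' You write that pointwise nonpositivity of $V_\infty$ is too much to ask and that one must instead extract an averaged bound from the Marchenko data. This is false, and it is the whole point of the application: the Marchenko parametrization yields the explicit identity \eqref{sigma0},
\[
V_\infty(x) = -2\sigma_0(x) = -2\int d\sigma(x,t),
\]
where $\sigma(x,\cdot)$ is the (positive) representing measure for the shifted operator. Hence every $W\in\mathcal M_R$ satisfies $W(x)\le 0$ for all $x\in\R$, pointwise and in fact with real-analytic regularity. Spectrum in $[-R^2,0]$ does not produce oscillation of the potential; it produces a nonzero $\sigma$ and therefore a strictly negative $W$. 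With this in hand the contradiction is immediate: if $\int W\varphi>0$ for some $\varphi\ge 0$, then $W>0$ somewhere. So what you identified as the difficult step is in fact a one-line consequence of \eqref{sigma0}, and your proposed search for a weaker ``averaged'' inequality is unnecessary.
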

This says that in the situation described by Hypothesis \ref{H1.1}, the positive part of $V$ will go to zero, in a weak sense.
\begin{Theorem}
\label{T1.1b}
Assume Hypothesis \ref{H1.1}. If, in addition, $V\ge 0$ on $\bigcup (x_n-d,x_n+d)$ for some
increasing sequence $x_n\to\infty$ with bounded gaps
(that is, $\sup (x_{n+1}-x_n)<\infty$) and some $d>0$, then
\begin{equation}
\label{1.21}
\lim_{x\to\infty} \int V(x+t)\varphi(t)\, dt = 0
\end{equation}
for every compactly supported, continuous function $\varphi$.
\end{Theorem}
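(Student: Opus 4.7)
The plan is to upgrade Theorem~\ref{T1.1}, which only delivers $\limsup_{x\to\infty}\int V(x+t)\varphi(t)\,dt \le 0$ for $\varphi \ge 0$, to a genuine two-sided limit by combining it with the reflectionless-on-the-limit theorem from \cite{Remcont}. Splitting $\varphi = \varphi^+ - \varphi^-$ reduces the claim to the case $\varphi \ge 0$, where Theorem~\ref{T1.1} already supplies the upper bound and only the matching lower bound $\liminf \ge 0$ is missing. I will argue by contradiction: suppose $\int V(y_k+t)\varphi(t)\,dt \to -\epsilon < 0$ along some $y_k\to\infty$.

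By the uniform local integrability \eqref{Vlocint}, the family $\{V(\cdot + x)\}_{x\ge 0}$ is weakly relatively compact in $L^1_{\mathrm{loc}}(\R)$; extract a subsequence so that $V(\cdot + y_k) \rightharpoonup W$. Passing Theorem~\ref{T1.1} to the limit gives $\int W\psi \le 0$ for all continuous compactly supported $\psi\ge 0$, hence $W \le 0$ a.e. The bounded-gap hypothesis transfers too: along a further subsequence, the shifted center sequences $\{x_n - y_k\}_n$ converge, in Hausdorff distance on each bounded window, to a biinfinite discrete set $\{\tilde x_m\}_{m\in\Z}$ with $\sup_m(\tilde x_{m+1}-\tilde x_m) \le \sup_n(x_{n+1}-x_n)$. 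For any $\psi \ge 0$ continuous with compact support in the interior of $\tilde A := \bigcup_m(\tilde x_m - d, \tilde x_m + d)$, the support of $\psi$ lies in $(x_{n_k} - y_k - d, x_{n_k} - y_k + d)$ for some index $n_k$ and all $k$ large enough, so $V(y_k + t) \ge 0$ on $\mathrm{supp}(\psi)$ and thus $\int W\psi \ge 0$ in the limit. Combining with $W \le 0$ forces $W \equiv 0$ a.e.\ on $\tilde A$.

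By the main result of \cite{Remcont}, Hypothesis~\ref{H1.1} ensures that every weak limit of $V(\cdot + y_k)$ is reflectionless on $(0,\infty)$, so $W$ is. The decisive final step is then a rigidity statement, to be extracted from the Marchenko parametrization of Sect.~4: a bounded-below potential reflectionless on $(0,\infty)$ that vanishes a.e.\ on a set whose complement has bounded gaps must vanish identically. Granting this, $W \equiv 0$, which contradicts $\int W\varphi = -\epsilon$, and the theorem follows. The weak-compactness step, the application of Theorem~\ref{T1.1} in the limit, and the invocation of \cite{Remcont} are all fairly standard given the machinery of the paper; the main obstacle, and the essential use of the Marchenko representation in this proof, is precisely this rigidity statement — one needs to check that vanishing of the potential on arbitrarily large windows forces the Marchenko representing measure of $W$ to be trivial.
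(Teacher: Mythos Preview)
Your framework is the same as the paper's: pass to an $\omega$-limit $W$ along a sequence where the conclusion fails, use \cite{Remcont} to place $W$ in $\mathcal M_R$, and then invoke a rigidity property of $\mathcal M_R$ to force $W\equiv 0$. The difference is that you have not identified the sharp rigidity statement that the Marchenko parametrization actually delivers, and as a result your argument is both more elaborate than necessary and left incomplete at the crucial step.

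The paper's key observation is \eqref{sigma0}: for any $W\in\mathcal M_R$ one has $W(x)=-2\sigma_0(x)=-2\int d\sigma(x,t)\le 0$ \emph{pointwise}, and $W(x_0)=0$ at a \emph{single} point $x_0$ forces the representing measure $\sigma(x_0,\cdot)$ to be zero, whence $m_\pm$ are the free $m$ functions and $W\equiv 0$. This single-point rigidity is strictly stronger than the statement you formulate (vanishing on a set with bounded-gap complement), and it makes the Hausdorff-limit bookkeeping of the shifted center sequence, the construction of $\tilde A$, and the a.e.\ argument all unnecessary. One only needs that the bounded-gap hypothesis guarantees some interval $(x_{n_k}-d,x_{n_k}+d)$ survives near the origin in the limit, so that $W\ge 0$ \emph{somewhere}; combined with $W\le 0$ everywhere this gives a single zero, and the argument is finished.

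Your closing remark that one must check ``vanishing of the potential on arbitrarily large windows forces the Marchenko representing measure of $W$ to be trivial'' is also off the mark: your set $\tilde A$ consists of intervals of length at most $2d$, not arbitrarily large ones, so that line of reasoning would not apply as stated. Fortunately it is not needed---the single-point rigidity from \eqref{sigma0} does the job immediately.
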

Theorem \ref{T1.1b} is a variation on the (continuous) Denisov-Rakhmanov Theorem \cite{Den,Remcont}. Recall that the DR Theorem asserts that
\eqref{1.21} will follow
if, in addition to Hypothesis \ref{H1.1}, we have that $\sigma_{ess}(H_+)=[0,\infty)$,
In Theorem \ref{T1.1b}, we replace this latter assumption by partial information on $V$; more precisely,
we assume here that $V$ is non-negative every once in a while,
with positive frequency.
\begin{Theorem}
\label{T1.1c}
Assume Hypothesis \ref{H1.1}. We are given $d>0$ (arbitrarily small) and $\epsilon>0$
and (arbitrarily many) compactly supported, continuous test functions
$\varphi_1,\ldots,\varphi_N$.
Then there exist $x_0>0$ and $\delta>0$ so that the following holds: If $x\ge x_0$ and $V(t)\ge -\delta$ for $|t-x|< d$, then
\[
\left| \int V(t)\varphi_j(t-x) \, dt \right| < \epsilon
\]
for $j=1,\ldots, N$.
\end{Theorem}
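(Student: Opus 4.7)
The plan is a proof by contradiction that combines a compactness/weak-limit argument with a rigidity property of the Marchenko class.

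Suppose the conclusion fails for some $d,\epsilon,\varphi_1,\dots,\varphi_N$. After pigeonholing we obtain sequences $x_n\to\infty$, $\delta_n\downarrow 0$ and a fixed index $j$ such that the translates $V_n(t):=V(t+x_n)$ satisfy $V_n(t)\ge -\delta_n$ for $|t|<d$ while $\left|\int V_n(t)\varphi_j(t)\,dt\right|\ge\epsilon$. The uniform local integrability \eqref{Vlocint} makes $V_n\,dt$ locally of uniformly bounded total variation, so by a standard compactness argument a subsequence converges (vaguely as signed measures, say) to a limit that, by the main result of \cite{Remcont}, corresponds to a Schr\"odinger operator $H_\infty$ reflectionless on $(0,\infty)$; hence $H_\infty\in\mathcal M_R$ for some $R$. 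Write its defining potential/measure as $d\mu_\infty$.

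Theorem \ref{T1.1} applied to $H_+$ with a test function $\varphi\ge 0$, continuous and compactly supported, and passed to the limit yields $\int\varphi\,d\mu_\infty\le 0$, so $d\mu_\infty\le 0$ as a distribution on all of $\R$. On the other hand, for $\varphi\ge 0$ supported in $(-d,d)$, the lower bound $V_n\ge -\delta_n$ survives in the limit and gives $\int\varphi\,d\mu_\infty\ge 0$. Therefore $d\mu_\infty$ vanishes on $(-d,d)$. The test-function convergence also gives $\left|\int\varphi_j\,d\mu_\infty\right|\ge\epsilon$, so $H_\infty$ is not the free Schr\"odinger operator. The contradiction will then follow from the rigidity claim: any element of $\mathcal M_R$ whose potential vanishes on a nonempty open interval must be the free Schr\"odinger operator.

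I expect this rigidity step to be read off from Theorem \ref{Tcont}: vanishing of the potential on $(-d,d)$ means that, for any base point inside this interval, the two half-line $m$-functions agree with those of the free operator on an open neighborhood of that base point; by Lemma \ref{L2.1} they paste together into a holomorphic $M:\C^+\cup(0,\infty)\cup\C^-\to\C^+$ that coincides with the free one on a nonempty open set, hence everywhere by the identity principle. Transporting back via the conformal map $\varphi$, the Herglotz function $F=M\circ\varphi$, and hence the representing measure $\sigma$, must be those of the free operator, so the Marchenko parametrization forces $H_\infty$ to be free as well, contradicting $\left|\int\varphi_j\,d\mu_\infty\right|\ge\epsilon$. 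This rigidity step is the main obstacle: it is a uniqueness statement truly internal to the Marchenko parametrization, whereas the rest is a standard package of weak-limit bookkeeping, Remling's theorem that $\omega$-limits of half-line potentials with $\Sigma_{ac}\supset(0,\infty)$ are reflectionless there, and Theorem \ref{T1.1}.
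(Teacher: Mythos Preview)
Your overall architecture matches the paper's: pass to $\omega$-limits, which lie in $\mathcal M_R$ by \cite{Remcont}; use that any $W\in\mathcal M_R$ satisfies $W\le 0$; use the rigidity that $W$ vanishing somewhere forces $W\equiv 0$; and derive a contradiction via compactness. The paper organizes this into two separate compactness steps (first inside $\mathcal M_R$ to find $\delta$, then an approximation of $S_xV$ by $\omega$-limits), whereas you run a single contradiction; both are fine.

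The genuine problem is your rigidity argument. You assert that if the limit potential vanishes on $(-d,d)$, then ``for any base point inside this interval, the two half-line $m$-functions agree with those of the free operator.'' This is false: $m_+(z)$ based at $x_0\in(-d,d)$ is determined by $V$ on \emph{all} of $(x_0,\infty)$, not just locally, so $V\equiv 0$ on $(-d,d)$ gives you no information whatsoever about $m_\pm$ at $x_0$. The identity-principle argument that follows therefore has no valid starting point.

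The correct rigidity argument, which the paper extracts directly from Theorem \ref{Tcont}, uses formula \eqref{sigma0}: $V(x)=-2\sigma_0(x)$, where $\sigma_0(x)=\int d\sigma(x,t)\ge 0$ is the total mass of the Marchenko measure at base point $x$. Thus $W(x_0)=0$ at a \emph{single} point already forces $\sigma(x_0,\cdot)=0$, hence $F(\lambda)=\lambda$ at that base point, hence $m_\pm$ (at $x_0$) are the free half-line $m$-functions, hence $W\equiv 0$. This also shows that your detour through Theorem \ref{T1.1} to obtain $W\le 0$ is unnecessary: the inequality $W\le 0$ is immediate from $W=-2\sigma_0$. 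With this corrected rigidity step substituted in, your proof goes through.
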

In particular, this conclusion is obtained if $V\ge 0$ on $|t-x|<d$, in which case $\delta$ becomes irrelevant.

This is an Oracle Theorem type statement that, roughly speaking, says that if $V$ is almost non-negative
anywhere, then $V$ has to be close to zero on a very long interval centered at that point (not in a pointwise sense, though).

Let us now discuss a completely different application of the Marchenko parametrization.
Call a half line operator $H_+$ or $J_+$ (on $L^2(0,\infty)$ or $\ell^2(\Z_+)$, respectively) reflectionless on $S$ if the
corresponding $m$ function $m_+$ satisfies \eqref{defrefl} for some (unique, if it exists at all) Herglotz function $m_-$.

Reflectionless half line operators may, of course, be obtained by restricting reflectionless whole line problems. Since
reflectionless operators may be reconstructed from arbitrary half line restrictions, we can actually think of such a half
line restriction as just another representation of the original whole line problem. Perhaps
somewhat surprisingly, however, there are other examples:
\begin{Theorem}
\label{T1.2}
(a) There exists a half line Jacobi matrix $J_+$ that is reflectionless on $(-2,2)$, but is not the restriction of a reflectionless whole line
Jacobi matrix.\\
(b) There exists a half line Schr\"odinger operator $H_+$ that is reflectionless on $(0,\infty)$, but is not the restriction of a reflectionless
whole line Schr\"odinger operator.
\end{Theorem}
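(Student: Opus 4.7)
The plan for both (a) and (b) is to exploit an asymmetry between the Marchenko parametrizations of whole line and half line reflectionless operators. By Lemma~\ref{L2.1}, reflectionlessness of $J_+$ on $(-2,2)$ is equivalent to $m_+$ admitting a Herglotz extension $M\colon \C^+\cup(-2,2)\cup\C^-\to\C^+$, and then $m_-(z)=-\overline{M(\overline{z})}$ is the unique Herglotz function appearing in \eqref{defrefl}. The operator $J_+$ is the restriction of a whole line reflectionless Jacobi matrix if and only if $m_-$, modulo the conventional one-step transformation at the origin that relates whole line and half line $m$-functions, is itself the $m$-function of a bounded half line Jacobi matrix. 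So the task is to exhibit an $M$ whose $\C^+$-piece is a valid half line Jacobi $m$-function while its reflected partner $m_-$ is not of this form.

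I would carry this out through the Marchenko data. Transporting $M$ by the conformal change of variable $z=\varphi(\lambda)$ produces a Herglotz function $F=M\circ\varphi$ on $\C^+$ with representing measure $\sigma$. Theorem~\ref{Tdisc} identifies the whole line reflectionless Jacobi matrices with the prescribed spectral structure as exactly those whose $\sigma$ satisfies Marchenko's inequality. The half line condition, by contrast, only requires that $m_+=M|_{\C^+}$ be a Jacobi $m$-function; this is strictly weaker, since it constrains only the part of the data visible on $\C^+$, leaving the part controlling $m_-$ essentially free. Starting from the $\sigma_0$ associated to the free Jacobi matrix, I would add a small perturbation $\delta\sigma$ supported on the ``lower sheet component'' of the parameter domain, chosen so that (i) the perturbed $m_+$ remains an $m$-function of a bounded half line Jacobi matrix (a probability measure with compact support, with $m_+(iy)\sim -1/(iy)$ at infinity), and (ii) the perturbation pushes $\sigma$ out of the region cut out by Marchenko's inequality, producing an $m_-$ with a feature (for instance an extra atom, or linear growth at infinity) not compatible with being the whole line partner of any half line Jacobi matrix. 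Inverting the parametrization yields the desired $J_+$ and settles (a). Part (b) follows by the same scheme with Theorem~\ref{Tcont} and the Marchenko data for $\mathcal M_R$ in place of the discrete objects.

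The main obstacle is the construction itself: identifying the ``lower sheet'' component of $\sigma$ that is invisible to $m_+$, and verifying quantitatively that a perturbation there preserves the Jacobi $m$-function character of $m_+$ (which ultimately reduces to a moment/Herglotz-type check) while strictly violating Marchenko's inequality. In the Schrödinger case (b) one additionally has to contend with the unbounded spectrum and the local integrability requirement on $V$; ensuring that the perturbed data still corresponds to a genuine half line Schrödinger operator with $V\in L^1_{\mathrm{loc}}$ is the most delicate step.
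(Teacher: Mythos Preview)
Your overall framework is exactly the paper's: produce an $F$ via the Marchenko parametrization so that $m_+$ (the behavior of $F$ near $\lambda=0$) is a genuine Jacobi $m$-function while $m_-$ (the behavior of $F$ near $\lambda=\infty$) is not. But the concrete construction you sketch would not work, for two related reasons.

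First, there is no ``lower sheet component of the parameter domain'' that is ``invisible to $m_+$.'' The measure $\sigma$ lives on a single real set and enters $m_+$ and $m_-$ through the same formula \eqref{3.2}; what distinguishes the two half line functions is only which asymptotic regime of $F$ ($\lambda\to 0$ versus $|\lambda|\to\infty$) one reads off. Second, and more importantly, violating inequality \eqref{3.1} is the wrong target. That inequality encodes the norm bound $\|J\|\le R$; if you violate it for one $R$ but the other structural conditions still hold, you simply land in $\mathcal M_{R'}$ for a larger $R'$ and still get a bounded whole line Jacobi matrix. What actually characterizes the whole line case (see the remark after Theorem~\ref{Tdisc}) is the pair of conditions: $\sigma$ compactly supported away from $0$, and $\sigma_{-2}<1$. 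The latter governs the linear coefficient $B=1-\sigma_{-2}$ in \eqref{3.2}, and hence whether $m_-$ has the required growth $m_-(z)\sim z/a_0^2$ from \eqref{m-}. A \emph{small} perturbation of the free data $\sigma=0$ gives $\sigma_{-2}$ small, so it cannot break this. The paper instead takes $\sigma=\delta_1$, so $\sigma_{-2}=1$ exactly; then $F$ is bounded at infinity, $m_-$ has no linear growth, and cannot be a Jacobi $m$-function, while the $\lambda\to 0$ asymptotics of $F$ are untouched and $m_+$ remains a perfectly good half line $m$-function (indeed $\sigma(J_+)=[-2,2]$).

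For (b) the mechanism is analogous but the obstruction is different: Theorem~\ref{Tcont} says whole line membership forces $\sigma$ to be compactly supported, so the paper takes a $\sigma$ with unbounded support (for instance $d\sigma=\chi_{(1,\infty)}e^{-t}\,dt$). The half line $m_+$ is then checked directly via the Gelfand--Levitan conditions (1)--(2), while $\rho_-$ acquires a tail on $(-\infty,-1)$ that makes the distribution \eqref{4.11} ill-defined, so $m_-$ is not a Schr\"odinger $m$-function. Your instinct that the Schr\"odinger verification is the most delicate step is correct, but again a small perturbation within the compactly supported class cannot produce the example.
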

Put differently, the associated $m$ function $m_-$ that is obtained from $m_+$ via \eqref{defrefl} is not the $m$ function of
a Jacobi matrix or Schr\"odinger operator, respectively. The examples we will construct to prove Theorem \ref{T1.2} will be quite explicit,
especially in the discrete case;
they will satisfy $\sigma(J_+)=[-2,2]$, $\sigma(H_+)=[0,\infty)$, so it is not spectrum outside $S$ (there isn't any) that
produces this effect. We will see below that Theorem \ref{T1.2} is in fact a rather quick consequence of the Marchenko parametrization.

We organize this paper as follows. Section~2 presents a very quick review of certain spectral data that are tailor made for the
discussion of reflectionless operators; we also prove Lemma \ref{L2.1} there. In Sections 3 and 4, we formulate and prove the
Marchenko parametrizations in the discrete and continuous settings, respectively. The remaining results are proved in the final
two sections.
\section{Preliminaries}
We briefly review some standard material about certain spectral data that are particularly convenient
if one wants to discuss reflectionless operators. See \cite{PR2,Remac} for a more comprehensive discussion.

Given a pair of Herglotz functions $m_{\pm}$ that satisfies \eqref{defrefl}, consider
$H=m_+ +m_-$. Since this is another Herglotz function, we can take a holomorphic logarithm,
which is a Herglotz function itself, if we agree that $\textrm{Im}\,\ln H\in (0,\pi)$, say.  The \textit{Krein function }of $H$ is then defined
(almost everywhere, with respect to Lebesgue measure) by
\[
\xi(x) = \frac{1}{\pi} \lim_{y\to 0+} \textrm{Im}\,\ln H(x+iy) .
\]
We have that $0\le\xi\le 1$, and \eqref{defrefl} implies that
$\xi=1/2$ a.e.\ on $S$. Next, if
\[
H(z) = A + Bz + \int_{-\infty}^{\infty} \left(\frac{1}{t-z} - \frac{t}{t^2+1} \right)\, d\rho (t)
\]
is the Herglotz representation of $H$, then it's easy to verify (see, for example, \cite[Sect. 5]{Remac} for the details) that
\begin{equation}
\label{2.1}
m_+(z) = A_+ + B_+z + \int_{-\infty}^{\infty} \left(\frac{1}{t-z} - \frac{t}{t^2+1} \right)f(t) \, d\rho (t) ,
\end{equation}
and here $0\le B_+\le B$, $0\le f\le 1$, $f=1/2$ Lebesgue-a.e.\ on $S$.

Conversely, these data determine an $m_+$ that will satisfy \eqref{defrefl}.
More explicitly, if measurable functions $\xi, f$ with $0\le \xi, f\le 1$
and $\xi=f=1/2$ a.e.\ on $S$ are given, and if we also choose three constants $C>0$, $0\le c\le 1$, $A_+\in\R$, then $\xi$ and $C$ first of all
determine a unique $H$ with $|H(i)|=C$. We in fact have the explicit formula
\begin{equation}
\label{expH}
H(z) = C\exp \left[ \int_{-\infty}^{\infty} \left(\frac{1}{t-z}-\frac{t}{t^2+1}\right)\xi(t)\, dt \right] .
\end{equation}
Then \eqref{2.1} with $B_+=cB$ defines an $m_+$, which will satisfy \eqref{defrefl},
with $m_-=H-m_+$. Any $m_+$ satisfying \eqref{defrefl} is obtained in this way.

Let us now sketch the proof of Lemma \ref{L2.1}.
\begin{proof}[Proof of Lemma \ref{L2.1}]
Obviously, if $M$ is as in the lemma, then \eqref{defrefl} holds, with $m_-(z):=-\overline{M(\overline{z})}$ ($z\in\C^+$).

Conversely, assume that \eqref{defrefl} holds with $S=(a,b)$. Since it suffices to prove the claim for arbitrary bounded
subintervals of $S$, we may assume that $S$ itself is bounded. Now consider $H$, defined as above.
As observed earlier, its Krein function satisfies $\xi=1/2$ a.e.\ on $S$. Since
\[
\frac{1}{2} \int_a^b \frac{dt}{t-z} ,
\]
originally defined for $z\in\C^+$,
has a holomorphic continuation through $(a,b)$ (evaluate the integral!), we see from the exponential Herglotz representation \eqref{expH}
that $H$ itself has the same
property. Now \eqref{2.1} makes it clear that $m_+$ has such a holomorphic continuation, too. Here we use the fact
that in the situation under consideration, $\rho$ cannot have a singular part on $(a,b)$; this follows immediately from our earlier
observation that $H$ can be holomorphically continued through this interval.

By \eqref{defrefl}, this continuation of $m_+$ must be
given by $M(z)=-\overline{m_-(\overline{z})}$ for $z=x-iy$, $a<x<b$, $y>0$ and small, so we can actually continue to all of $\C^-$ and
this continuation clearly maps $\C^+\cup\C^-$ to $\C^+$, and $\textrm{Im}\, M(x)\ge 0$ for $x\in S$. The proof is now finished by
observing that the open mapping theorem gives us strict inequality here.
\end{proof}
\section{The discrete case}
We are now interested in Jacobi matrices $J$ on $\ell^2(\Z)$ that are reflectionless on $S=(-2,2)$ and satisfy $\|J\|\le R$
for some $R\ge 2$. We will denote the collection of these Jacobi matrices by $\mathcal M_R$.

The half line $m$ functions may be defined as follows: For $z\in\C^+$, let $f_{\pm}(\cdot, z)$
be the solutions of
\[
a_n f(n+1,z) + a_{n-1} f(n-1,z) + b_n f(n,z) = zf(n,z)
\]
that are square summable near $\pm\infty$ (the assumption that $J$ is bounded makes sure that these are unique up to multiplicative constants).
Now we can let
\[
m_{\pm}(z) = \mp \frac{f_{\pm}(1,z)}{a_0 f_{\pm}(0,z)} .
\]
We are assuming \eqref{defrefl} on $(-2,2)$, so
by Lemma \ref{L2.1}, we can combine $m_{\pm}$ into one function $M:\Omega \to \C^+$, $\Omega=\C^+\cup (-2,2)\cup\C^-$.
Off the interval $(-2,2)$, $M$ is given by
\begin{equation}
\label{formM}
M(z) = \begin{cases} m_+(z) & z\in\C^+ \\ -\overline{m_-(\overline{z})} & z\in\C^- \end{cases} .
\end{equation}
Following our earlier outline, we now want to introduce a conformal change of variable $\varphi: \C^+\to\Omega$. We will work
with the specific map\[
\varphi(\lambda)=-\lambda-\frac{1}{\lambda} .
\]
In the subsequent developments, it is useful to keep in mind that
$\varphi$ maps the upper half of the unit circle onto $(-2,2)$. The upper semi-disk is mapped onto $\C^+$, while the complement (in $\C^+$,
of the closed disk) goes to $\C^-$ under $\varphi$. (Of course, $\varphi$ is defined by the formula given for arbitrary $\lambda\not= 0$, and we
will frequently make use of this extended map without further comment.)

As anticipated, we now define the new Herglotz function
\[
F(\lambda) = M(\varphi(\lambda)) \quad\quad (\lambda\in\C^+) .
\]
It will also be convenient to let $r$ denote the solution $r+1/r=R$ with $0<r\le 1$; this is well defined because we are assuming that $R\ge 2$.
Also, we will write $\sigma_n \equiv \int t^n\, d\sigma(t)$ for the (generalized) moments of a measure $\sigma$, for $n\in\Z$.
\begin{Theorem}
\label{Tdisc}
$J\in\mathcal M_R$ if and only if the associated $F$ function is of the form
\begin{equation}
\label{3.2}
F(\lambda) = -\sigma_{-1} + (1-\sigma_{-2})\lambda + \int \frac{d\sigma(t)}{t-\lambda} ,
\end{equation}
for some finite Borel measure $\sigma$ on $(-1/r, -r)\cup (r,1/r)$ that satisfies
\begin{equation}
\label{3.1}
1-\sigma_{-2} + \int \frac{d\sigma(t)}{t^2+Et+1} > 0
\end{equation}
for all $|E|>R$.
\end{Theorem}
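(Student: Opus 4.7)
The plan is to work directly from the definition $F(\lambda):=M(\varphi(\lambda))$ on $\mathbb{C}^+$. Since $\varphi:\mathbb{C}^+\to\Omega$ is a conformal bijection and $M:\Omega\to\mathbb{C}^+$ is holomorphic by Lemma \ref{L2.1}, $F$ is a Herglotz function and admits a Herglotz representation. For the forward direction I would first locate the representing measure $\sigma$: because $\|J\|\le R$ forces $\sigma(J_\pm)\subset[-R,R]$, both $m_\pm$ extend real-analytically across $\mathbb{R}\setminus[-R,R]$, and at any real $\mu\ne 0$ with $\varphi(\mu)\in\mathbb{R}\setminus[-R,R]$ the boundary value $F(\mu+i0)$ reduces, via \eqref{formM} and by tracking whether $\varphi$ preserves or reverses orientation near $\mu$, to either $m_+(\varphi(\mu))$ or $-m_-(\varphi(\mu))$, both real. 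Since $\varphi$ sends $\mathbb{R}\setminus([-1/r,-r]\cup[r,1/r])$ into $\mathbb{R}\setminus[-R,R]$, Schwarz reflection extends $F$ holomorphically across the complementary real set, confining $\operatorname{supp}\sigma$ to $[-1/r,-r]\cup[r,1/r]$; a further argument at the endpoints (non-atomicity of $m_\pm$ at $\pm R$) gives the open-interval support asserted in the theorem.

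The specific form \eqref{3.2} comes from the behavior of $F$ at $\lambda=0$. Using the standard asymptotic $m_+(z)=-1/z+O(1/z^2)$ as $z\to\infty$ together with $\varphi(\lambda)=-1/\lambda+O(\lambda)$ as $\lambda\to 0$, one gets $F(\lambda)=\lambda+O(\lambda^2)$ near $\lambda=0$. Matching this against the general Herglotz representation and using the Taylor expansion $\int d\sigma(t)/(t-\lambda)=\sigma_{-1}+\sigma_{-2}\lambda+O(\lambda^2)$ (valid since $\sigma$ is supported away from $0$), the conditions $F(0)=0$ and $F'(0)=1$ force the constant and linear coefficients to be $-\sigma_{-1}$ and $1-\sigma_{-2}$ respectively, giving \eqref{3.2}. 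To derive \eqref{3.1}, a direct partial-fraction computation from \eqref{3.2} yields
\[
F(\mu)-F(1/\mu)=(\mu-1/\mu)\left[1-\sigma_{-2}+\int\frac{d\sigma(t)}{t^2+Et+1}\right],\qquad E=\varphi(\mu)=\varphi(1/\mu).
\]
For real $\mu$ with $|E|>R$, the boundary-value identification above gives $F(\mu)-F(1/\mu)=\pm(m_+(E)+m_-(E))$, the sign depending on whether $|\mu|<1$ or $|\mu|>1$; since $|E|>R$ lies in the resolvent set of $J$, this quantity is nonzero, and the asymptotics $m_+\sim -1/E$, $m_-\sim E/a_0^2$ as $|E|\to\infty$ confirm that the bracket is strictly positive throughout.

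For the converse, given $F$ of the form \eqref{3.2} with support and positivity as stated, set $M:=F\circ\varphi^{-1}:\Omega\to\mathbb{C}^+$, then $m_+(z):=M(z)$ on $\mathbb{C}^+$ and $m_-(z):=-\overline{M(\bar z)}$; these are Herglotz functions satisfying \eqref{defrefl} on $(-2,2)$ by construction. The form \eqref{3.2} is equivalent to $F(\lambda)=\lambda+O(\lambda^2)$ at $\lambda=0$, which translates back to $m_+(z)=-1/z+O(1/z^2)$ at infinity; this normalization identifies $m_+$ (and analogously $m_-$) as the $m$-function of a unique bounded half-line Jacobi matrix, together determining a whole-line $J$. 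The main obstacle is verifying $\|J\|\le R$: the support bound on $\sigma$ extends $M$ holomorphically across $\mathbb{R}\setminus[-R,R]$, ruling out continuous spectrum there, but excluding eigenvalues requires the displayed identity read in reverse. The positivity \eqref{3.1} is precisely what forces $F(\mu)-F(1/\mu)\ne 0$, equivalently $m_+(E)+m_-(E)\ne 0$, for every $|E|>R$, thereby excluding point spectrum and completing $\sigma(J)\subset[-R,R]$.
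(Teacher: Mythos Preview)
Your approach is essentially the paper's: locate $\operatorname{supp}\sigma$ via analyticity of $m_\pm$ off $[-R,R]$, pin down the coefficients in \eqref{3.2} by matching the $\lambda\to 0$ asymptotics, derive \eqref{3.1} from the factorization $F(\mu)-F(1/\mu)=(\mu-1/\mu)\bigl[1-\sigma_{-2}+\int\frac{d\sigma}{t^2+Et+1}\bigr]$ together with the sign of $H=m_++m_-$ off the spectrum, and run the converse by reconstructing $m_\pm$ and excluding spectrum outside $[-R,R]$. Two places need tightening, however.

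First, your endpoint argument is not right as stated. ``Non-atomicity of $m_\pm$ at $\pm R$'' is not a hypothesis and not obviously true for $J\in\mathcal M_R$; you cannot simply invoke it. The paper does this in the opposite order: it first establishes \eqref{3.1} with $\sigma$ a priori supported on the \emph{closed} set $[-1/r,-r]\cup[r,1/r]$, and then observes that a point mass at, say, $t=r$ would make $\int\frac{d\sigma}{t^2+Et+1}\to -\infty$ as $E\to(-R)^-$ (since $r^2+Er+1=r(E+R)$), contradicting \eqref{3.1}. You should reorder accordingly.

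Second, in the converse your claim ``$m_+(E)+m_-(E)\ne 0$ for $|E|>R$ excludes point spectrum'' is incomplete. The identity $a_0^2(m_++m_-)=-1/g$ shows only that the Green function $g$ has no pole at such $E$, which rules out eigenvalues whose eigenfunction satisfies $u(0)\ne 0$. You must also dispose of the case $u(0)=0$: then the restrictions of $u$ to the half lines would force $\rho_\pm(\{E\})>0$, contradicting the support bound $\operatorname{supp}\rho_\pm\subset[-R,R]$ that you already extracted from $\operatorname{supp}\sigma$. The paper makes exactly this case distinction.
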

To spell this out even more explicitly, this says that if $J\in\mathcal M_R$, then the associated $F$ will have a representation of
the form \eqref{3.2}, with a $\sigma$ that has the stated properties. It is also clear that we have uniqueness: $J$ determines
$m_{\pm}$ and thus $F$ and $\sigma$.
Conversely, if a measure $\sigma$ satisfies \eqref{3.1} (and is supported on the set given), then \eqref{3.2} defines a function that is the
$F$ function of a unique $J\in\mathcal M_R$.

In other words, Theorem \ref{Tdisc}
sets up a one-to-one correspondence between $J\in\mathcal M_R$ and the measures $\sigma$ on $r< |t|< 1/r$ satisfying \eqref{3.1}.

If we are not interested in the specific value of $\|J\|$, then we may interpret Theorem \ref{Tdisc} as setting up a one-to-one correspondence
between bounded, reflectionless (on $(-2,2)$) Jacobi matrices and measures $\sigma$ that are supported by a compact subset of $\R\setminus
\{ 0\}$ and satisfy $\sigma_{-2}<1$. To obtain this version, it suffices to observe that the integral from \eqref{3.1} goes to zero as $|E|\to\infty$.

The proof will depend on the asymptotic properties of $m_{\pm}$ for a Jacobi matrix, so we briefly review these first.
See, for example, \cite[Ch.~2]{Teschl} for this material.

For any $J$ with $\|J\|\le R$, we have that
\begin{gather}
\label{m+}
m_+(z) = \int \frac{d\rho_+(t)}{t-z} \\
\label{m-}
a_0^2 m_-(z) = z-b_0 + a_{-1}^2 \int \frac{d\rho_-(t)}{t-z} ,
\end{gather}
and here $\rho_{\pm}$ are probability (Borel) measures supported by $[-R,R]$. Conversely,
if we are given such data (that is, we are given two compactly supported probability measures $\rho_{\pm}$ and numbers $a_0,a_{-1}>0$,
$b_0\in\R$), then there will be a bounded whole line Jacobi matrix $J$ with half line $m$ functions given by \eqref{m+}, \eqref{m-}.
Moreover, if both $\rho_+$ and $\rho_-$ have infinite supports, then this $J$ will be unique.

Whether or not a given Herglotz function has a representation of this type can be decided by looking at the large $z$ asymptotics:
\begin{Lemma}
\label{L3.2}
Let $g$ be a Herglotz function and let $a>0$. Then
\[
g(z)=\int \frac{d\rho(t)}{t-z}, \quad\quad \rho(\R)=a
\]
for some finite measure $\rho$ if and only if
$\lim_{y\to\infty}yg(iy)=ia$.
\end{Lemma}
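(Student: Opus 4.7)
The plan is to work directly from the general Herglotz representation
\[
g(z) = A + Bz + \int_{-\infty}^{\infty}\left(\frac{1}{t-z} - \frac{t}{t^2+1}\right)d\rho(t),\qquad B\ge 0,
\]
and read off from the asymptotics of $yg(iy)$ as $y\to\infty$ what each piece of these data must be. Decomposing
\[
\frac{y}{t-iy} = \frac{yt}{t^2+y^2} + i\,\frac{y^2}{t^2+y^2}
\]
will be the computational workhorse in both directions.

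For the forward direction, I would assume $g(z)=\int d\rho(t)/(t-z)$ with $\rho(\R)=a<\infty$ and simply compute
\[
yg(iy) = \int\frac{yt}{t^2+y^2}\,d\rho(t) + i\int\frac{y^2}{t^2+y^2}\,d\rho(t).
\]
The real integrand is pointwise bounded by $1/2$ (AM--GM) and tends to $0$, while the imaginary integrand is bounded by $1$ and tends to $1$, so two applications of dominated convergence (legitimate since $\rho$ is finite) give $yg(iy)\to ia$.

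For the converse I would examine the two parts of $yg(iy)$ from the general representation separately. The imaginary part is $By^2 + \int y^2/(t^2+y^2)\,d\rho(t)$; existence of a finite limit immediately forces $B=0$, and Fatou's lemma applied to the nonnegative integrand then gives $\rho(\R)\le a$, so $\rho$ is in fact finite, and dominated convergence upgrades this to $\rho(\R)=a$. With $\rho$ now known to be finite, the constant $c:=\int t/(t^2+1)\,d\rho(t)$ is well defined, and the real part of $yg(iy)$ reads $(A-c)y + \int yt/(t^2+y^2)\,d\rho(t)$. The remaining integral tends to $0$ by dominated convergence, so the leading linear term forces $A=c$. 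Substituting back into the representation, the additive constant $A$ cancels precisely against the $-t/(t^2+1)$ subtraction and I obtain $g(z)=\int d\rho(t)/(t-z)$ as desired.

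The only real subtlety is getting the ordering right in the converse: one cannot meaningfully define $c=\int t/(t^2+1)\,d\rho(t)$ until $\rho$ is known to be finite, so the imaginary part, where the integrand is nonnegative and Fatou is available, must be analyzed first in order to upgrade the a priori integrability $\int d\rho(t)/(t^2+1)<\infty$ built into the Herglotz representation to total finiteness of $\rho$. Only then is one free to split the real part into linear pieces and pin down $A$.
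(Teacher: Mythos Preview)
Your proof is correct and follows essentially the same route as the paper's: both use the general Herglotz representation, analyze the imaginary part of $yg(iy)$ first to force $B=0$ and $\rho(\R)=a$, and then use finiteness of $\rho$ to split the integral and determine $A$. The only cosmetic difference is that the paper invokes monotone convergence on $y^2/(t^2+y^2)$ (which is increasing in $y$) to get $\rho(\R)=a$ in one step, whereas you use Fatou followed by dominated convergence; both arguments are valid.
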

\begin{proof}
If $g$ has such a representation, then $yg(iy)\to ia$ follows immediately from dominated convergence. To prove the converse, write down
the (general) Herglotz representation of $g$:
\begin{equation}
\label{3.3}
g(z) = A + Bz + \int \left(\frac{1}{t-z}-\frac{t}{t^2+1} \right)\, d\rho(t)
\end{equation}
Then
\[
y\,\textrm{Im}\,g(iy) = By^2 + \int \frac{y^2}{t^2+y^2}\, d\rho(t) .
\]
By monotone convergence, the integral converges to $\rho(\R)$, so it follows that $\rho(\R)=a$ and $B=0$. In particular, we know now
that $\rho$ is finite, so we may split the integral from \eqref{3.3} into two parts and, using the hypothesis again, we then conclude that
$A-\int t/(t^2+1)\, d\rho(t)=0$.
\end{proof}
We are now ready for the
\begin{proof}[Proof of Theorem \ref{Tdisc}]
We first show that $F,\sigma$ have the asserted properties if $J\in\mathcal M_R$. Recall first of all that $m_{\pm}$ have holomorphic
continuations to a neighborhood of $(-\infty,-R)\cup (R,\infty)$. (This continuation of $m_+$ will, of course, be different from the continuation $M$ of the
same function, where the domains overlap.)
This follows because $\rho_{\pm}$ are supported by $[-R,R]$.
As a consequence, $F$ can be holomorphically continued through $\R\setminus \{ t: r\le |t| \le 1/r \}$; indeed, the set removed
contains all those $t\in\R$ that get mapped to $[-R,R]$ under the map $\varphi$.
At $t=0$, we need to argue slightly differently: $F$ can be holomorphically to a neighborhood of this point
because $m_+(z)$ is holomorphic at $z=\infty$. We will discuss this in more detail shortly.

So, if we now write down the Herglotz
representation of $F$, then the representing measure $\sigma$ will be supported by $\{t: r\le |t|\le 1/r \}$.
In particular, such a $\sigma$ is finite,
so we may again split off the $t/(t^2+1)$ term in \eqref{3.3} and absorb it by $A$. We arrive at the following representation:
\begin{equation}
\label{3.98}
F(\lambda) = A + B\lambda + \int\frac{d\sigma(t)}{t-\lambda}
\end{equation}

We can now identify $A,B$ by comparing the asymptotics of this function, as $\lambda\to 0$, with those of $m_+$.
Indeed, if $\lambda\in\C^+$ is close to zero, then $\varphi(\lambda)\in\C^+$, so $F(\lambda)=m_+(\varphi(\lambda))$ for these $\lambda$,
and \eqref{m+} shows that
\[
m_+(\varphi(\lambda)) =  -\frac{1}{\varphi(\lambda)} + O(\lambda^2) = \lambda + O(\lambda^2) .
\]
This confirms that $\sigma(\{ 0\})=0$, as claimed earlier.
We then see from a Taylor expansion of \eqref{3.98} that
\[
F(\lambda) = A + \sigma_{-1} + (B+\sigma_{-2})\lambda + O(\lambda^2) .
\]
It follows that $A=-\sigma_{-1}$ and $B=1-\sigma_{-2}$, as asserted in \eqref{3.2}.

To obtain \eqref{3.1}, we take a look at the function $H(z)=m_+(z)+m_-(z)$. As observed above, in the proof of Lemma \ref{L2.1},
$H$ has a holomorphic continuation through $(-2,2)$. Equivalently, the function $h(\lambda)=H(\varphi(\lambda))$, originally defined
for $\lambda\in\C^+$, $|\lambda|<1$, may be holomorphically continued through the upper half of the unit circle. On $|\lambda|=1$,
we can obtain this continuation as
\[
h(\lambda) = F(\lambda)-\overline{F(\lambda)}
\]
and since $\overline{\lambda}=1/\lambda$ for these $\lambda$, this gives
\begin{align}
\nonumber
h(\lambda) & = B\left(\lambda-\frac{1}{\lambda}\right) +\int\left( \frac{1}{t-\lambda}-\frac{1}{t-1/\lambda}\right)\, d\sigma(t)\\
& = \left(\lambda-\frac{1}{\lambda}\right) \left( 1-\sigma_{-2} + \int \frac{d\sigma(t)}{(t-\lambda)(t-1/\lambda)} \right ) .
\label{3.5}
\end{align}
Since the right-hand sides are analytic functions of $\lambda$, these formulae hold for all $\lambda\in\C^+$, $|\lambda|\le 1$.
It is useful to observe here that $h_0=\lambda-1/\lambda$ is the $H$ function of the free Jacobi matrix $a_n\equiv 1$, $b_n\equiv 0$.
Now $a_0^2 H(z) = -1/g(z)$, where $g(z)=\langle \delta_0, (J-z)^{-1}\delta_0 \rangle$ is the Green function of $J$ at $n=0$. This
implies that $H(x)<0$ for $x<-R$ (to the left of the spectrum) and $H(x)>0$ for $x>R$. Since $h_0$ already has the correct signs,
this forces the last factor from \eqref{3.5} to be positive for $|E|>R$. This gives \eqref{3.1}.

Finally, observe that \eqref{3.1} also prevents point masses at $t=\pm r$, $t=\pm 1/r$, so $\sigma$ is indeed supported by the
(open) set given in the Theorem.
For example, if we had $\sigma(\{ r\})>0$, then the integral from \eqref{3.1} would diverge to $-\infty$ as $E\to -R$, $E<-R$.

Conversely, assume now that a measure $\sigma$ on $(-1/r,-r)\cup (r,1/r)$ satisfying \eqref{3.1} is given.
We want to produce a $J\in\mathcal M_R$ so that
this $\sigma$ represents its $F$ function. It is clear how to proceed: define $F$ by \eqref{3.2} and let
\begin{align}
\label{m++}
m_+(\varphi(\lambda)) &= F(\lambda) \quad & (|\lambda|<1, \lambda\in\C^+) , \\
m_-(\varphi(\lambda)) &= -\overline{F(\overline{\lambda})} \quad & (|\lambda|>1, \lambda\in\C^-) .
\label{m--}
\end{align}
Since $\varphi$ maps both of these domains conformally onto $\C^+$, this defines two Herglotz functions $m_{\pm}$. As the first step,
to just obtain a Jacobi matrix $J$ from $m_{\pm}$, we have to verify
that these functions satisfy \eqref{m+}, \eqref{m-}.

So let $y>0$ (typically large), and let $s>0$ be the unique positive solution of $1/s-s=y$. Then $\varphi(is)=iy$ and $s=1/y+O(1/y^3)$.
Thus a Taylor expansion of \eqref{3.2} shows that $m_+$, defined by \eqref{m++}, satisfies
$m_+(iy) = i/y + O(y^{-2})$. Lemma \ref{L3.2} implies that $m_+$ satisfies \eqref{m+}, with $\rho_+(\R)=1$. In fact, $\rho_+$
is supported by $[-R,R]$. This follows because the definition \eqref{m++} also makes sure that $m_+(z)$ can be holomorphically
continued through the complement (in $\R$) of this interval.

Similarly, for large positive $t$, we have that
\[
-\overline{F(it)} = i(1-\sigma_{-2}) t + \sigma_{-1} + \frac{i\sigma_0}{t} + O(t^{-2}) .
\]
As before, take $t>1$ to be the solution of $\varphi(it)=iy$ for (large) $y>0$.
It then follows that $m_-$, defined by \eqref{m--}, satisfies
\begin{equation}
\label{3.12}
m_-(iy) = i(1-\sigma_{-2})y +\sigma_{-1} + i\frac{1-\sigma_{-2}+\sigma_0}{y} + O(y^{-2}) \quad\quad (y\to\infty) .
\end{equation}
We can now again refer to Lemma \ref{L3.2} to conclude that $m_-$ satisfies \eqref{m-}, with
\begin{equation}
\label{a0}
a_0 = \left(1-\sigma_{-2}\right)^{-1/2} , \quad b_0 = -\frac{\sigma_{-1}}{1-\sigma_{-2}} .
\end{equation}
Note in this context that \eqref{3.1} implies that $1-\sigma_{-2}>0$. So \eqref{a0} does
define coefficients $a_0>0$, $b_0\in\R$. By suitably defining $a_{-1}>0$, we can then guarantee that
$\rho_-(\R)=1$. As above, we also see that $\rho_-$ is in
fact supported by $[-R,R]$.

By the material reviewed at the beginning of this section, we obtain a unique Jacobi matrix $J$ from the pair $m_{\pm}$.
It is indeed unique because $\rho_{\pm}$ are equivalent to Lebesgue measure on $(-2,2)$, so are certainly not supported by
a finite set. It is immediate from the definition of $m_{\pm}$ that this $J$ will be reflectionless on $(-2,2)$, and, by construction,
its $F$ function is represented by the measure $\sigma$ we started out with.

It remains to show that $\|J\|\le R$. We observed that $\rho_{\pm}$ are supported by $[-R,R]$, and the essential
spectrum can be determined by decomposing into half lines, so if there is spectrum outside $[-R,R]$, it can only consist of discrete eigenvalues.
If we had such a discrete eigenvalue at $E_0$, $|E_0|>R$, then the corresponding eigenfunction $u$ must satisfy $u(0)\not= 0$
because if $u(0)=0$, then $u$ would be in the domain of the half line problems and thus $\rho_{\pm}(\{ E_0 \} )>0$,
contradicting the fact that these measures are supported by $[-R,R]$. However, $u(0)\not=0$ says that $u$ has non-zero
scalar product with $\delta_0$, thus the representing measure of $g(z)=\langle \delta_0, (J-z)^{-1}\delta_0 \rangle$ has
a point mass at $E_0$. This implies that $a_0^2 H(x) = -1/g(x)$ changes its sign at $x=E_0$ (this function is holomorphic near $E_0$,
so this statement makes sense), but we already argued in the first part of this proof that \eqref{3.1} prevents such a sign change.
\end{proof}
It was proved in \cite[Theorem 1.2]{RemDR} that if $J\in\mathcal M_R$ for some $R\ge 2$, then $a_n\ge 1$ for all $n\in\Z$.
Moreover, if $a_{n_0}=1$ for a single $n_0\in\Z$, then $a_n\equiv 1$, $b_n\equiv 0$. This is now an immediate consequence
of Theorem \ref{Tdisc}. Indeed, \eqref{a0} says that $1/a_0^2=1-\sigma_{-2}\le 1$, and we can only have equality here if
$\sigma_{-2}=0$, which forces $\sigma$ to be the zero measure. It's easy to check that this makes $m_{\pm}$ equal to the
half line $m$ functions of the free Jacobi matrix. To obtain the full claim, it now suffices to recall that $\mathcal M_R$ is shift invariant.

It is tempting to try to obtain more information about the coefficients of a $J\in\mathcal M_R$ in this way,
by relating them to the moments of $\sigma$. The following result is probably unimpressive, but it can serve as an illustration.
Also, as we'll discuss after the proof, it is optimal.
Recall that we define $r\in (0,1]$ by the equation $r+1/r=R$.
\begin{Proposition}
\label{P3.11}
If $J\in\mathcal M_R$ is not the free Jacobi matrix, then for all $n\in\Z$, we have that $a_n> 1$ and
\begin{equation}
\label{3.11}
r^2 < \frac{a_{n+1}^2-1}{a_n^2-1} < \frac{1}{r^2} .
\end{equation}
\end{Proposition}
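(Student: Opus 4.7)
The plan is to reduce the problem to a single ratio calculation and then invoke shift invariance of $\mathcal{M}_R$. First I would note that the strict bound $a_n > 1$ is already in hand: by the discussion in the paragraph immediately following the proof of Theorem \ref{Tdisc}, having $a_{n_0} = 1$ at any single site forces $J$ to be the free Jacobi matrix, contradicting our hypothesis.

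For the ratio \eqref{3.11}, formula \eqref{a0} gives $a_0^2 - 1 = \sigma_{-2}/(1 - \sigma_{-2})$. The key new ingredient is the analogous formula for $a_{-1}$, which I would obtain by expanding $F(\lambda)$ as $\lambda \to \infty$ in $\C^+$ in two ways. From the representation \eqref{3.2} directly,
\[
F(\lambda) = (1 - \sigma_{-2})\lambda - \sigma_{-1} - \frac{\sigma_0}{\lambda} + O(\lambda^{-2}).
\]
On the other hand, for large $\lambda \in \C^+$ the point $\varphi(\lambda) = -\lambda - 1/\lambda$ lies in $\C^-$, so by \eqref{formM} we have $F(\lambda) = -\overline{m_-(\overline{\varphi(\lambda)})}$. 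Inserting the large-$z$ expansion of $m_-$ read off from \eqref{m-} and then expanding $1/\varphi(\lambda)$ in powers of $1/\lambda$, this becomes
\[
F(\lambda) = \frac{\lambda}{a_0^2} + \frac{b_0}{a_0^2} + \frac{1 - a_{-1}^2}{a_0^2 \lambda} + O(\lambda^{-2}).
\]
The $\lambda$ and constant coefficients merely reproduce \eqref{a0}, but matching the $1/\lambda$ terms yields $a_{-1}^2 - 1 = \sigma_0/(1 - \sigma_{-2})$, and hence the clean identity
\[
\frac{a_{-1}^2 - 1}{a_0^2 - 1} = \frac{\sigma_0}{\sigma_{-2}}.
\]

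To close, I would observe that since $\sigma$ is supported in the open set $(-1/r, -r) \cup (r, 1/r)$, the function $t \mapsto 1/t^2$ takes values strictly between $r^2$ and $1/r^2$ on $\operatorname{supp}\sigma$. Because $\sigma \not\equiv 0$ (else $J$ would be free), integration against $d\sigma$ preserves these strict inequalities, giving $r^2\sigma_0 < \sigma_{-2} < \sigma_0/r^2$ and thus $\sigma_0/\sigma_{-2} \in (r^2, 1/r^2)$. Since $\mathcal{M}_R$ is shift-invariant, applying the same identity to every shift of $J$ gives $(a_{n-1}^2 - 1)/(a_n^2 - 1) \in (r^2, 1/r^2)$ for every $n \in \Z$. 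The interval $(r^2, 1/r^2)$ is symmetric under $x \mapsto 1/x$, so after relabeling this is exactly \eqref{3.11}.

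The main obstacle is the asymptotic calculation for $F$ at $\lambda = \infty$ via the conformal change of variable: one has to carefully retain the $1/\lambda$ correction coming from the $1/\lambda$ term in $\varphi(\lambda)$, since it is precisely this sub-leading piece that encodes $a_{-1}$. Everything else is routine bookkeeping or the one-line moment estimate; the pleasant feature is that the factor $(1 - \sigma_{-2})$ cancels from the ratio, leaving only the moment ratio $\sigma_0/\sigma_{-2}$, which is directly controlled by the support condition on $\sigma$.
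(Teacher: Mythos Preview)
Your proof is correct and follows essentially the same route as the paper's: both arguments identify $\sigma_{-2}$ and $\sigma_0$ with $(a_0^2-1)/a_0^2$ and $(a_{-1}^2-1)/a_0^2$ respectively, reduce the ratio to $\sigma_{-2}/\sigma_0$ (or its reciprocal), bound that by the support condition $r<|t|<1/r$, and finish by shift invariance. The only cosmetic difference is that you re-derive the large-$\lambda$ expansion of $F$ from scratch, whereas the paper simply quotes \eqref{3.12}, which had already been computed during the proof of Theorem~\ref{Tdisc}.
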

\begin{proof}
The inequality $a_n>1$ was established above; we only need to prove \eqref{3.11}. By comparing \eqref{3.12} with \eqref{m-},
we obtain that
\begin{equation}
\label{3.13}
\sigma_{-2}=1-\frac{1}{a_0^2}, \quad\quad \sigma_0=\frac{a_{-1}^2-1}{a_0^2} .
\end{equation}
Now $r^2< t^{-2} < 1/r^2$ on the support of $\sigma$, hence
\begin{equation}
\label{3.14}
r^2\sigma_0< \sigma_{-2} < \frac{1}{r^2}\sigma_0 .
\end{equation}
Strict inequality would in fact not follow for the the zero measure $\sigma=0$, but that would lead us back to
the free Jacobi matrix, the case that we explicitly excluded.

Now \eqref{3.11}, for $n=-1$, follows by combining \eqref{3.14} with \eqref{3.13}.
We then obtain \eqref{3.11} for arbitrary $n$ by shift invariance.
\end{proof}
The inequalities \eqref{3.11} are indeed sharp, as we pointed out earlier, because they are a rephrasing of \eqref{3.14}, and
we can get arbitrarily close to equality here with measures of the form $\sigma=g\delta_{1/r-\epsilon}$ or $\sigma=g\delta_{r+\epsilon}$.
\section{The continuous case}
We consider Schr\"odinger operators $H=-d^2/dx^2+V(x)$ on $L^2(\R)$, with locally integrable potentials $V$.
We assume limit point case at $\pm\infty$. Then, for $z\in\C^+$, there are unique (up to a constant factor) solutions
$f_{\pm}$ of $-f''+Vf=zf$ that are square integrable near $\pm\infty$. The half line $m$ functions may now be defined as follows:
\begin{equation}
\label{mcont}
m_{\pm}(z) = \pm \frac{f'_{\pm}(0,z)}{f_{\pm}(0,z)}
\end{equation}
These obey the asymptotic formulae
\begin{equation}
\label{mcasymp}
m_{\pm}(z) = \sqrt{-z} + o(1)
\end{equation}
as $|z|\to\infty$ inside a sector $\delta \le \arg z \le \pi-\delta$. See, for example, \cite{Atk,Everitt,GS2,Harris}.

We proceed as in the previous section. We now say that $H\in\mathcal M_R$ if $H$ is reflectionless on $(0,\infty)$ and
$\sigma(H)\subset [-R^2,\infty)$. Occasionally, we will abuse terminology and/or notation and instead say that $V$ is in $\mathcal M_R$.
For $H\in\mathcal M_R$, we again obtain a holomorphic function $M:\Omega\to\C^+$ from Lemma \ref{L2.1},
where now $\Omega = \C^+\cup (0,\infty)\cup \C^-$. Off the real line, $M$ is again given by \eqref{formM}. We use the conformal
map $\varphi:\C^+\to\Omega$, $\varphi(\lambda)=-\lambda^2$ to introduce the Herglotz function $F(\lambda)=M(\varphi(\lambda))$.
We then have the following analog of Theorem \ref{Tdisc}.
\begin{Theorem}
\label{Tcont}
$H\in\mathcal M_R$ if and only if the associated $F$ function is of the form
\begin{equation}
\label{4.3}
F(\lambda) =\lambda + \int\frac{d\sigma(t)}{t-\lambda}
\end{equation}
for some finite Borel measure $\sigma$ on $(-R,R)$ that satisfies
\begin{equation}
\label{4.1}
1+ \int\frac{d\sigma(t)}{t^2-R^2} \ge 0 .
\end{equation}

Moreover, if $H\in\mathcal M_R$, then $V$ is real analytic. More specifically,
$V(x)$ has a holomorphic continuation $V(z)$ to the strip $|\textrm{\rm Im}\, z|<1/R$.
\end{Theorem}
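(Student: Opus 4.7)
My plan is to parallel the discrete-case proof of Theorem \ref{Tdisc}, with $\varphi(\lambda) = -\lambda - 1/\lambda$ replaced by $\varphi(\lambda) = -\lambda^2$, the ``unit circle in $\C^+$'' (the $\varphi$-preimage of the reflectionless interval $S$) replaced by the positive imaginary axis, and the reflection symmetry $\lambda \mapsto 1/\overline\lambda$ replaced by $\lambda \mapsto -\overline\lambda$. For the forward direction, I would first show that the half-line (Dirichlet) spectra $\sigma(H_\pm)$ lie in $[-R^2, \infty)$ so that $m_\pm$ extend holomorphically through $(-\infty, -R^2)$; this follows by a quadratic form comparison---extending an $L^2(0,\infty)$ test function vanishing at $0$ by zero to the whole line preserves the inequality $H \ge -R^2$. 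Combined with Lemma \ref{L2.1}, this implies that $F = M \circ \varphi$ has real boundary values on $\R \setminus [-R, R]$ and hence extends holomorphically through that set, so its Herglotz measure $\sigma$ is supported on $[-R, R]$ and finite. Writing $F(\lambda) = A + B\lambda + \int d\sigma(t)/(t - \lambda)$ and taking $\lambda = iy$, $y \to +\infty$ (for which $\varphi(iy) = y^2$ and the Herglotz-compatible branch of $\sqrt{-z}$ in \eqref{mcasymp} gives $M(y^2) = iy + o(1)$), the match $F(iy) = A + iBy + O(1/y) = iy + o(1)$ forces $A = 0$, $B = 1$ and yields \eqref{4.3}.

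The argument for \eqref{4.1} mirrors the derivation of \eqref{3.5}: setting $h(\lambda) := H(\varphi(\lambda))$ on $\C^+$ and $\tilde F(\lambda) := \overline{F(-\overline\lambda)}$, the reflectionless identity $m_-(x+i0) = -\overline{m_+(x+i0)}$ for $x \in S$ translates to $h(\lambda) = 2i\,\mathrm{Im}\,F(\lambda) = F(\lambda) - \tilde F(\lambda)$ on the positive imaginary axis (where $\varphi(\lambda) \in S$ and $-\overline\lambda = \lambda$), hence on all of $\C^+$ by analytic continuation. A direct computation then yields the continuous analog of \eqref{3.5},
\[
h(\lambda) = 2\lambda\Bigl(1 + \int \frac{d\sigma(t)}{t^2 - \lambda^2}\Bigr).
\]
Zeros of $H$ on $(-\infty, -R^2)$ would correspond to eigenvalues of the whole-line $H$ (via vanishing of the Wronskian $W(f_-, f_+)$), and these are excluded by $\sigma(H) \subset [-R^2, \infty)$; combined with the asymptotic \eqref{mcasymp}, this gives $H < 0$ throughout $(-\infty, -R^2)$. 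Since the $S$-side continuation of $H$ into $\C^-$ satisfies $H^{\mathrm{cont}}(z) = -\overline{H(\overline z)}$, one obtains $h(E+i0) = -H(-E^2) > 0$ for real $E > R$, which forces the bracket above to be positive; letting $E \to R^+$ gives \eqref{4.1}, and the same inequality rules out point masses of $\sigma$ at $\pm R$.

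The reverse direction is symmetric: given such a $\sigma$, define $F$ by \eqref{4.3}, $M := F \circ \varphi^{-1}$ on $\Omega$, $m_+ := M|_{\C^+}$, and $m_-(z) := -\overline{M(\overline z)}$. The asymptotic $F(\lambda) = \lambda + O(1/\lambda)$ translates into $m_\pm(z) = \sqrt{-z} + o(1)$, which by the standard inverse spectral theory for Schr\"odinger operators produces a unique potential $V$ realizing these $m_\pm$. Reflectionless-ness on $(0, \infty)$ is immediate, and $\sigma(H) \subset [-R^2, \infty)$ follows by the same argument used at the end of the proof of Theorem \ref{Tdisc}: an eigenvalue below $-R^2$ would force a sign change in $H$ that \eqref{4.1} now prohibits.

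The real-analyticity claim is where I expect the main obstacle. My strategy is to exploit the shift covariance of $\mathcal M_R$: translating $V$ by $x$ sends $\sigma$ to a new measure $\sigma_x$ still supported in $(-R, R)$, and $V(x)$ should admit an explicit expression as a real-analytic functional of $\sigma_x$ (analogous to reading off $a_0, b_0$ from low-order moments of $\sigma$ via \eqref{a0} in the discrete case). The strip width $1/R$ should emerge because the shift acts on the Marchenko transform by factors of the form $e^{\pm 2 s t}$ with $|t| < R$, which remain bounded precisely for $|\mathrm{Im}\,x| < 1/R$. Making the $x$-dependence sufficiently explicit to justify this analytic continuation into the strip is the delicate step of the argument.
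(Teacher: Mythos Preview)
Your forward direction matches the paper's almost verbatim (the identity $h(\lambda)=2\lambda\bigl(1+\int d\sigma/(t^2-\lambda^2)\bigr)$ and the sign analysis of $H$ on $(-\infty,-R^2)$ are exactly what the paper does), with one small caveat: \eqref{mcasymp} is stated only in sectors $\delta\le\arg z\le\pi-\delta$, so you should send $\lambda\to\infty$ along a ray in $Q_2$ rather than along $i\R_+$, which lands at real $z=y^2$ where \eqref{mcasymp} is not directly available.

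The first real gap is in the converse. You assert that $m_\pm(z)=\sqrt{-z}+o(1)$ ``by the standard inverse spectral theory'' already yields a Schr\"odinger potential. That asymptotic alone does not characterize Schr\"odinger $m$ functions among Herglotz functions, and the paper works harder here. It invokes the Gelfand--Levitan criterion, which in addition to a density condition on $\rho_+$ requires that the distribution $\phi(t)=\int x^{-1/2}\sin(t\sqrt x)\,d(\rho_+-\rho_0)(x)$ (with $\rho_0$ the free half-line spectral measure) be represented by a locally integrable function. Computing $\rho_+$ from $F$ gives a density behaving like $cx^{-1/2}+O(x^{-1})$ at infinity; the $O(x^{-1})$ piece is harmless, but the borderline $x^{-1/2}$ piece needs an honest distributional calculation---its contribution to $\phi$ turns out to be $\pi\,\mathrm{sgn}(t)$, recognized via the Fourier transform of $\mathrm{PV}(1/s)$. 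You underestimate this step.

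For real analyticity, your proposed mechanism (the shift acting on $\sigma$ by factors $e^{\pm 2st}$) is correct only for pure solitons and does not carry over to general $\sigma$. The paper takes a different, concrete route: set $p(x,w)=1/w-F_x(1/w)=\sum_{n\ge 0}\sigma_n(x)\,w^{n+1}$ and feed this into the Riccati equation $\partial_x m_+=V-z-m_+^2$. Comparing coefficients yields $V=-2\sigma_0$ together with the recursion $\sigma_n'=-2\sigma_{n+1}+\sum_{j=0}^{n-1}\sigma_j\sigma_{n-1-j}$. Condition \eqref{4.1} provides the uniform bound $|\sigma_n(x)|\le R^{n+2}$, and an induction on $p$ (summing the Leibniz terms via a Vandermonde-type identity) then gives $|\sigma_n^{(p)}(x)|\le R^{n+p+2}(n+1+p)!/(n+1)!$. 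For $n=0$ this reads $|V^{(p)}|\le 2R^{p+2}(p+1)!$, from which the strip width $1/R$ follows immediately. Your instinct that one should track how the data evolve under shift is right; the point is that the evolution is governed by a Riccati-type moment recursion, not an explicit exponential action on $\sigma$.
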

As in the discrete case, this establishes a one-to-one correspondence between Schr\"odinger operators $H\in\mathcal M_R$ and
measures $\sigma$ on $(-R,R)$ satisfying \eqref{4.1}. Also as before, if we are not interested in the value of $R$, then we can say
that Theorem \ref{Tcont} provides us with a one-to-one correspondence between Schr\"odinger operators $H$ that are reflectionless
on $(0,\infty)$ and bounded below and compactly supported measures $\sigma$.
\begin{proof}
It is again straightforward to check that given an $H\in\mathcal M_R$, the corresponding $F$ has such a representation.
The general Herglotz representation of $F$ reads
\[
F(\lambda) = A + B\lambda + \int \left(\frac{1}{t-\lambda}-\frac{t}{t^2+1}\right)\, d\sigma(t) .
\]
Now \eqref{mcasymp} immediately shows that $B=1$ here. Moreover, $m_{\pm}(z)$ have holomorphic continuations through $(-\infty,-R^2)$.
Since $\R\setminus [-R,R]$ gets mapped to this set under $\varphi$, it follows that $\sigma$ is supported by $[-R, R]$,
as claimed (point masses at the end points will be prevented by \eqref{4.1}). We can again split off the second term from the integral and absorb
it by $A$. The redefined $A$ must then satisfy $A=0$, by \eqref{mcasymp}. Thus \eqref{4.3} holds.

To obtain \eqref{4.1}, we again consider $H=m_+ +m_-$ and $h(\lambda)=H(\varphi(\lambda))$, for $\lambda\in\C^+$,
$\textrm{Re}\,\lambda < 0$. This function has a holomorphic continuation through the imaginary axis, and for $\lambda=iy$, $y>0$,
we have that $\overline{\lambda}=-\lambda$, thus for these $\lambda$, it follows that
\begin{equation}
\label{4.6}
h(\lambda)  =F(\lambda)-\overline{F(\lambda)}
 = 2\lambda \left( 1 + \int\frac{d\sigma(t)}{t^2-\lambda^2} \right) .
\end{equation}
We conclude the argument as in the discrete case: By analyticity, \eqref{4.6} holds for all $\lambda$ in the second quadrant $Q_2$.
The function $h(\lambda)$ (more precisely: its boundary value as $\varphi(\lambda)\to x\in\R$, $x<-R^2$) must be negative for
all $\lambda\in\R$ with $\lambda<-R$, and the factor $2\lambda$ already has the correct sign, so the expression in parentheses must be positive.
By monotone convergence, when $\lambda$ increases to $-R$, the integrals $\int\frac{d\sigma}{\lambda^2-t^2}$ approach
$\int\frac{d\sigma}{R^2-t^2}$ and they increase strictly. Therefore, the condition that the last factor from \eqref{4.6} is positive
for all $\lambda<-R$ is equivalent to \eqref{4.1}.

Conversely, if a measure $\sigma$ on $(-R,R)$ satisfying \eqref{4.1} is given, define $F$ by \eqref{4.3} and then
\begin{align}
\label{4.41a}
m_+(\varphi(\lambda))&=F(\lambda)&(\lambda\in Q_2)\\
m_-(\varphi(\lambda))& = -\overline{F(\overline{\lambda})} & (\lambda\in Q_4) ;
\label{4.41b}
\end{align}
here, $Q_j\subset\C$ denotes the (open) $j$th quadrant.
By construction, this pair of Herglotz functions satisfies \eqref{defrefl} on $S=(0,\infty)$. We must show that $m_{\pm}$ are the half line
$m$ functions of a Schr\"odinger operator $H$. We thus need an inverse spectral theory result for Schr\"odinger operators that lets us
verify this claim. We will
refer to the classical Gelfand-Levitan theory; the version we will use is taken from \cite{RemdB}. Note that since we are dealing with limit point
operators here and since it is clear that $m_+(z)=\sqrt{-z}+o(1)$ as $|z|\to\infty$ inside suitable sectors for the $m_+$ just defined, we may
state the results of the discussion of \cite[Sect.~19]{RemdB} as follows (for convenience, we focus on the right half line for now):
Let $d\rho_0(x)=(1/\pi)\chi_{(0,\infty)}(x)\sqrt{x}\, dx$ be the half line spectral measure for zero potential. Consider the signed measure
$\nu=\rho_+ -\rho_0$, where $\rho_+$ is the measure associated with $m_+$. Then $m_+$ is the $m$ function of some half line
Schr\"odinger operator (with locally integrable potential) if and only if $\rho_+$ satisfies the following two conditions:
\begin{enumerate}
\item If $f\in L^2(0,L)$ for some $L>0$ and $\int |F|^2\, d\rho_+=0$, with $F(x) = \int f(t)\frac{\sin{t\sqrt{x}}}{\sqrt{x}}\, dt$, then $f=0$.
\item It is possible to define a distribution $\phi$ by
\begin{equation}
\label{4.11}
\phi(t) = \int \frac{\sin{t\sqrt{x}}}{\sqrt{x}}\, d\nu(x) .
\end{equation}
Moreover, $\phi$ is a locally integrable function.
\end{enumerate}
More explicitly, what (2) is asking for is the following: If $g\in C_0^{\infty}(\R)$, then
\begin{equation}
\label{conv}
\int d|\nu|(x) \left| \int dt\, g(t)\frac{\sin{t\sqrt{x}}}{\sqrt{x}} \right| < \infty
\end{equation}
and there is a locally integrable function $\phi$ so that for all $g\in C_0^{\infty}(\R)$, we have that
\begin{equation}
\label{phi}
\int d\nu(x) \int dt\, g(t)\frac{\sin{t\sqrt{x}}}{\sqrt{x}} = \int \phi(t)g(t)\, dt .
\end{equation}

Let us now check these conditions for the $m_+$ (or rather, $\rho_+$) defined above. To learn more about $\rho_+$, we have to analyze
the boundary values of $m_+(z)$ as $z$ approaches the real line; this corresponds to letting $\lambda\in Q_2$ approach either the
negative real axis or the positive imaginary axis. We find that
\[
d\rho_+(x) = d\mu(x) + \frac{1}{\pi}\chi_{(0,\infty)}(x) \textrm{Im}\,F(ix^{1/2})\, dx ,
\]
and here $\mu$ is a finite measure, supported by $[-R^2,0]$. In particular, $\rho_+$ is equivalent to Lebesgue measure on $(0,\infty)$,
so condition (1) holds trivially. As for condition (2), this definitely holds for compactly supported $\nu$; the locally integrable function
$\phi$ can then simply be obtained by taking \eqref{4.11} at face value. Also, to establish (2) for a sum of measures,
it clearly suffices to verify this condition for the individual summands separately.

So by splitting off a compactly supported part, we can now focus on
\[
d\nu_1(x) = \frac{1}{\pi} \chi_{(1,\infty)}(x) \left( \textrm{Im}\,F(ix^{1/2}) - x^{1/2} \right)\, dx .
\]
Observe that near infinity, $F(\lambda)=\lambda-\sigma_0\lambda^{-1} + O(\lambda^{-2})$, thus
\[
d\nu_1(x) = c\chi_{(1,\infty)}(x)x^{-1/2}\, dx + f(x)\, dx
\]
where the density $f\in C([1,\infty))$ satisfies $f(x)=O(x^{-1})$. It is clear that this decay is fast enough to give (2) for
this part of $\nu_1$; we will again end up interpreting \eqref{4.11} as a classical integral. By again splitting off a compactly supported part, we thus
see that it now suffices to verify (2) for the measure
\[
d\nu_2(x)=\chi_{(0,\infty)}(x) x^{-1/2}\, dx
\]
Clearly, \eqref{conv} holds. It also clear that the left-hand side of \eqref{phi} does define a distribution, and in fact a
tempered distribution. We now compute its Fourier transform. So apply the left-hand side to the Fourier transform $\widehat{g}$
of a test function $g$. We obtain that
\begin{align*}
\int d\nu_2(x) \int dt\, \widehat{g}(t)\frac{\sin{t\sqrt{x}}}{\sqrt{x}}  & =
-i\sqrt{\frac{\pi}{2}}\int_0^{\infty} \left( g(x^{1/2})-g(-x^{1/2})\right) \frac{dx}{x}\\
& = -i\sqrt{2\pi} \int_0^{\infty} (g(s)-g(-s))\frac{ds}{s} .
\end{align*}
It is easy to verify that this last integral equals $(\textrm{PV}(1/s), g)$, where the principal value distribution is defined as follows:
\[
\left( \textrm{PV}\left(\frac{1}{s}\right) , g\right) = \lim_{\delta\to 0+} \int_{|s|>\delta} \frac{g(s)}{s}\, ds
\]
Since $\textrm{PV}(1/s)$ is the Fourier transform of $i(\pi/2)^{1/2}\,\textrm{sgn}(t)$, we now see that $\phi_2(t) =\pi\,\textrm{sgn}(t)$, which is a locally
integrable function, as claimed.

Of course, one can give an analogous discussion for the left half line and $m_-$. So, to conclude the proof of the first part of the theorem,
we must show that the Schr\"odinger operator obtained above has spectrum contained in $[-R^2,\infty)$. This can be done
by the same arguments as in the discrete case:
Clearly, by the decomposition method for $\sigma_{ess}$, as $\rho_{\pm}$ are supported by this set, there is no \textit{essential }spectrum outside
$[-R^2,\infty)$. If we had a discrete eigenvalue $E_0<-R^2$, then the corresponding eigenfunction $u$ must satisfy $u(0)\not=0$ because otherwise
$\rho_{\pm}(\{ E_0\})>0$, but we already know that this is not the case. It then follows from the standard construction
of a spectral representation of the whole problem (see, for example, \cite[Sect.~9.5]{CodLev})
that $\rho(\{ E_0\} )>0$, where $\rho$ denotes the
measure associated with the Green function $g=-1/(m_++m_-)$. This implies that $H=m_++m_-$ changes its sign at $E_0$, but this
is incompatible with \eqref{4.1}: Recall that we in fact specifically formulated
\eqref{4.1} as the condition that would guarantee that $H$ is negative throughout $(-\infty,-R^2)$.

We now move on to the last part of the proof, which discusses the real analyticity of $V\in\mathcal M_R$. We will obtain this property from
the Riccati equation that is satisfied by $m_+$, together with a Taylor expansion about infinity. This part of the argument
essentially follows the treatment of \cite{Mar}.

Given a potential $V\in\mathcal M_R$, let
\[
p(w)= \frac{1}{w} - F\left(\frac{1}{w} \right) .
\]
We originally define this function for $w\in Q_3$; this choice makes sure that $F(1/w)=m_+(-1/w^2)$. However, it is also clear that
$p$ has a holomorphic continuation to a neighborhood of $w=0$. The corresponding Taylor expansion may be found from \eqref{4.3}:
\begin{equation}
\label{4.16}
p(w) = \sum_{n=0}^{\infty} \sigma_n w^{n+1} ,
\end{equation}
where we again write $\sigma_n=\int t^n\, d\sigma(t)$. We now claim that for $n\ge 0$,
\begin{equation}
\label{estmom}
|\sigma_n|\le R^{n+2} .
\end{equation}
To prove this, observe that obviously $|\sigma_n|\le \sigma_0 R^n$, since $\sigma$ is supported by $(-R,R)$. Now condition
\eqref{4.1} implies that $\sigma_0\le R^2$,
so we obtain \eqref{estmom}.
It follows that \eqref{4.16} converges at least on
$|w|<1/R$.

We now consider the shifted potentials $V_x(t)=V(x+t)$ and the associated data $p(x,w)$, $\sigma_n(x)$.
Since $\mathcal M_R$ was defined in terms of shift invariant conditions, $V_x$ will also be in $\mathcal M_R$ for all $x$.

From \eqref{mcont}, we obtain that (for $w\in Q_3$)
\begin{equation}
\label{ricd}
\frac{dp}{dx} = -V(x) + p^2(x,w) - \frac{2}{w}\, p(x,w) .
\end{equation}
We now temporarily work with the integrated form of this equation. We may then replace every occurrence of $p$ by its expansion \eqref{4.16};
this we can do for $|w|<1/R$. The interchange of series and integration in the resulting expressions is easily justified: The coefficients
$\sigma_n(x)$ are measurable (they can be obtained as derivatives with respect to $w$, so are pointwise limits of measurable functions),
and \eqref{estmom} gives uniform (in $x$) control, so dominated convergence applies. This produces
\begin{align*}
\sum_{n\ge 0} (\sigma_n(x)-\sigma_n(0))w^{n+1} & = -\int_0^x V(t)\, dt + \sum_{j,k\ge 0} w^{j+k+2}\int_0^x \sigma_j(t)\sigma_k(t)\, dt \\
& \quad\quad\quad - 2 \sum_{n\ge 0} w^n \int_0^x \sigma_n(t)\, dt .
\end{align*}
This was originally derived for $w\in Q_3$, $|w|<1/R$, but since both sides are holomorphic in $w$, the equation holds for all $|w|<1/R$.

We can now compare coefficients in these convergent power series. Starting with the constant terms, this gives that $\int_0^x V\, dt +
2\int_0^x \sigma_0\, dt=0$ or, by differentiation,
\begin{equation}
\label{sigma0}
V(x) = -2\sigma_0(x)
\end{equation}
for almost every $x$. Since $V$ may be redefined in an arbitrary way on a null set, we can assume that \eqref{sigma0} holds for all $x\in\R$.
(Of course, $\sigma_0(x)$ is well defined pointwise, for any given $x$, independently of the representative of $V$ chosen,
as the zeroth moment of the measure $d\sigma(x,\cdot)$ that represents the
$F$ function of $V_x$.)

Next, we obtain that
\[
\sigma_0(x)-\sigma_0(0) = -2 \int_0^x \sigma_1(t)\, dt .
\]
This shows that $\sigma_0$ is in fact absolutely continuous, and $\sigma'_0=-2\sigma_1$. Proceeding in this way, we see inductively
that $\sigma_n(x)$ is an absolutely continuous function for arbitrary $n\ge 0$. Moreover, since the derivatives $\sigma'_n$ are built from finitely
many other functions $\sigma_j$, they are bounded functions by \eqref{estmom}. We have a crude preliminary bound of the form
$|\sigma'_n(x)|\le CnR^n$.
This allows us to differentiate the series \eqref{4.16} (with respect to $x$) term by term, for $|w|<1/R$. We then return to the differential version
\eqref{ricd} of the Riccati equation.
By again comparing coefficients of power series, we finally arrive at the following recursion formulae:
\begin{align}
\nonumber
V(x) & = -2\sigma_0(x)\\
\label{recurs}
\sigma'_0(x) & = -2\sigma_1(x)\\
\nonumber
\sigma'_n(x) & = -2\sigma_{n+1}(x) + \sum_{j=0}^{n-1} \sigma_j(x)\sigma_{n-1-j}(x) \quad\quad\quad (n\ge 1)
\end{align}
Formally, this could have been obtained very quickly from \eqref{ricd}, but initially we did not know that the
$\sigma_n(x)$ are differentiable, so we had to be more circumspect. We now use this recursion to obtain more detailed
information about the $\sigma_n(x)$.
\begin{Lemma}
\label{L4.6}
The moments $\sigma_n(x)$ satisfy $\sigma_n\in C^{\infty}(\R)$ and
\begin{equation}
\label{estsigma}
\left| \sigma_n^{(p)}(x) \right| \le R^{n+p+2} \frac{(n+1+p)!}{(n+1)!} .
\end{equation}
\end{Lemma}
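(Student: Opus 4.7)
The plan is to run a single induction on the order $p$ of differentiation, with base case $p=0$ being exactly \eqref{estmom}. The $C^\infty$ smoothness claim comes along essentially for free: each $\sigma_n$ is already known to be absolutely continuous, and a parallel induction on the regularity class using \eqref{recurs} — whose right-hand side is a polynomial in the $\sigma_j$'s plus a linear term in $\sigma_{n+1}$ — shows that the regularity of every $\sigma_n$ improves by one class at each step, so each $\sigma_n\in C^\infty$.

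For the quantitative step I would fix $p$ and suppose the bound holds for all $m\ge 0$ and all orders $\le p$. The case $n=0$ is painless, since $\sigma_0^{(p+1)}=-2\sigma_1^{(p)}$ and the inductive hypothesis applies directly. For $n\ge 1$ I would differentiate \eqref{recurs} $p$ times, apply the Leibniz rule to each product $\sigma_j\sigma_{n-1-j}$, and insert the inductive bound. All $R$-powers conveniently collapse to a common factor $R^{n+p+3}$, so what remains is the purely combinatorial inequality
\[
2\,\frac{(n+p+2)!}{(n+2)!} + \sum_{j=0}^{n-1}\sum_{q=0}^{p}\binom{p}{q}\frac{(j+1+q)!\,(n-j+p-q)!}{(j+1)!\,(n-j)!} \le \frac{(n+p+2)!}{(n+1)!}.
\]

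The main obstacle will be evaluating that double sum. I would rewrite the inner sum as $p!\sum_q\binom{j+1+q}{q}\binom{n-j+p-q}{p-q}$ and invoke the Chu--Vandermonde identity $\sum_{q=0}^p\binom{a+q}{q}\binom{b+p-q}{p-q}=\binom{a+b+p+1}{p}$ (with $a=j+1$, $b=n-j$), which collapses the inner sum to $(n+p+2)!/(n+2)!$, independently of $j$. The outer sum over $j=0,\dots,n-1$ then contributes a factor $n$, and combined with the isolated term one gets $(n+2)(n+p+2)!/(n+2)!=(n+p+2)!/(n+1)!$, with equality. Thus the only nontrivial move in the whole argument is spotting the Chu--Vandermonde identity; the fact that equality is achieved also indicates that the constant stated in \eqref{estsigma} is sharp for this direct induction.
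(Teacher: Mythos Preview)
Your proposal is correct and follows essentially the same route as the paper: induction on $p$ with base case \eqref{estmom}, Leibniz expansion of the recursion \eqref{recurs}, and reduction to the same combinatorial identity, which the paper phrases as \eqref{idmar} and proves by a direct combinatorial interpretation rather than citing Chu--Vandermonde by name. The observation that equality is attained in the induction step is also implicit in the paper's computation.
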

Assuming Lemma \ref{L4.6}, we can finish the proof of Theorem \ref{Tcont} very quickly. By \eqref{sigma0}, the Lemma in particular
says that $V\in C^{\infty}$. Now \eqref{estsigma}, for $n=0$ and general $p\ge 0$, may be used to confirm that the Taylor series
of $V(x)$ about an arbitrary $x_0\in\R$ has radius of convergence $\ge 1/R$. We can then refer to the same estimates and one of the
standard bounds on the remainder to see that this Taylor series converges to $V(x)$ on $(x_0-1/R,x_0+1/R)$. Since the strip
$|\textrm{Im}\, z| < 1/R$ is simply connected, this shows that $V$ has a holomorphic continuation to the whole strip.
\end{proof}
\begin{proof}[Proof of Lemma \ref{L4.6}]
We already know that $\sigma_n\in C^1$, so
the first claim follows from \eqref{recurs}, by an obvious inductive argument. We prove \eqref{estsigma} by induction on $p$.
For $p=0$, this is just \eqref{estmom}. Now assume that \eqref{estsigma} holds for $0,1,\ldots ,p$ and all $n\ge 0$. We wish to establish
the same estimates for $p+1$ and all $n\ge 0$. We will explicitly discuss only the case $n\ge 1$; $n=0$ is similar, but much easier.
The Leibniz rule says that
\[
\frac{d^p}{dx^p}\left( \sigma_j\sigma_{n-1-j} \right) = \sum_{k=0}^p \binom{p}{k} \sigma_j^{(k)}\sigma_{n-1-j}^{(p-k)} ,
\]
so from \eqref{recurs} and the induction hypothesis we obtain that
\begin{align*}
\left| \sigma_n^{(p+1)}\right| & \le 2R^{n+p+3} \frac{(n+2+p)!}{(n+2)!}\: +\\
& \quad\quad\quad \sum_{j=0}^{n-1}\sum_{k=0}^p \binom{p}{k}
R^{n+p+3} \frac{(j+1+k)!}{(j+1)!}\frac{(n-j+p-k)!}{(n-j)!} .
\end{align*}
As observed in \cite[pg.~293]{Mar}, the sum over $k$ can be evaluated: we have that
\begin{equation}
\label{idmar}
\sum_{k=0}^p \binom{p}{k} \frac{(j+1+k)!}{(j+1)!}\frac{(n-j+p-k)!}{(n-j)!} = \frac{(n+p+2)!}{(n+2)!} 
\end{equation}
(we'll return to this formula in a moment).
Since the answer provided in \eqref{idmar} is independent of $j$, we can now also sum over this index. This gives
\begin{align*}
\left| \sigma_n^{(p+1)}\right| & \le 2R^{n+p+3} \frac{(n+2+p)!}{(n+2)!} + nR^{n+p+3} \frac{(n+p+2)!}{(n+2)!}\\
& = R^{n+p+3}
\frac{(n+p+2)!}{(n+1)!} ,
\end{align*}
as desired.

It remains to verify \eqref{idmar}. This can be rephrased: we must show that
\[
\sum_{k=0}^p \binom{N_1+k}{k}\binom{N_2-k}{p-k} = \binom{N_1+N_2+1}{p} ,
\]
for integers $N_1\ge 1$, $N_2\ge p$. It's not hard to convince oneself that the left-hand side can be given the same combinatorial
interpretation as the right-hand side (choose $p$ objects from a collection of $N_1+N_2+1$), so this identity holds.
\end{proof}
\section{Proof of Theorems \ref{T1.1}, \ref{T1.1b}, \ref{T1.1c}}
This will depend on material from \cite{Remcont}. We will give a quick review, but will refer the reader to \cite{Remcont}
for some of the more technical details.

The key tool is \cite[Theorem 3]{Remcont}, which says that if $V$ satisfies Hypothesis \ref{H1.1}, then any $\omega$ limit
point $W=\lim S_{x_n}V$ (that is, any such limit for a sequence $x_n\to\infty$)
under the shift map $(S_xV)(t)\equiv V(x+t)$ must be reflectionless on $(0,\infty)$. These limits
are taken inside a certain metric space $(\mathcal V^C, d)$ of whole line potentials. In fact, $\mathcal V^C$ is a space of signed
measures on $\R$, and locally integrable potentials $U$ are interpreted as the measures $U(x)\, dx$. However, for our purposes here,
measures can be avoided. This is so because the measure analog of the space $\mathcal M_R$ contains no new members: all
such measures will be (real analytic) functions anyway. The key fact here is the observation that we will still have \eqref{mcasymp}
for a Schr\"odinger operator $-d^2/dx^2+\mu$ with a measure, as long as $\mu(\{ 0 \})=0$. This follows from the standard
proofs of \eqref{mcasymp}, suitably adjusted. See also \cite[Lemma 5.1]{BARem}. If $\mu(\{0\})\not= 0$, then we can shift
and instead consider $S_{x_0}\mu$ for an $x_0$ with $\mu(\{ x_0\} )=0$.
With \eqref{mcasymp} in place, we can then follow the development given in Sect.~4 to confirm that an operator $-d^2/dx^2+\mu\in
\mathcal M_R$ still has an $F$ function of the form described in Theorem \ref{Tcont}, so no new operators are obtained.

The metric $d$ is described in detail in \cite{Remcont}; here, we will be satisfied with a non-technical description.
For our purposes, the following properties are important. First of all, convergence to a $W$ with respect to $d$ is equivalent to the condition that
\begin{equation}
\label{4.21}
\int W(t)\varphi(t)\, dt = \lim_{n\to\infty} \int V(x_n+t)\varphi(t)\, dt
\end{equation}
for all continuous, compactly supported test functions $\varphi$. (Only limit points $W\in\mathcal M_R$
will occur in our situation, so we may assume here that $W$ is continuous, say.)
Second, the spaces $(\mathcal V^C,d)$ are compact. Since also $\{S_xV\}\subset\mathcal V^C$, this means that we can always pass to
convergent subsequences of shifted versions of the original potential. Similarly, the spaces $\mathcal M_R$ are compact if endowed with the
same metric $d$.

Finally, it's easy to see that limit points $W$ cannot have spectrum outside the (in fact: essential) spectrum
of $H_+$ \cite[Proposition 1]{Remcont}. Thus they will lie in $\mathcal M_R$ if we take $R\ge 0$ so large that $H_+$ has no
(essential) spectrum below $-R^2$.

The second crucial ingredient to all three proofs is the following immediate consequence of \eqref{sigma0}: any $W\in\mathcal M_R$
satisfies $W(x)\le 0$ for all $x\in\R$. Moreover, if $W(x_0)=0$ for a
single $x_0\in\R$, then $W\equiv 0$. This follows as in the discrete case because $W(x_0)=0$ forces $\sigma$ (for $x_0$)
to be the zero measure, and this makes $m_{\pm}$ equal to the $m$ functions for zero potential.
\begin{proof}[Proof of Theorem \ref{T1.1}]
If the statement of the Theorem didn't hold, then we could find a sequence $x_n\to\infty$ so that $S_{x_n}V\to W$ (using compactness)
and \eqref{1.6} along that sequence converges to some $a>0$, for some test function $\varphi$. But then \eqref{4.21} forces $W$ to
be positive somewhere.
\end{proof}
\begin{proof}[Proof of Theorem \ref{T1.1b}]
This is similar. The extra assumption on $V$, if combined with \eqref{4.21}, makes sure that every limit point $W$ is non-negative somewhere.
As explained above, this implies that $W\equiv 0$. In other words, the zero potential is the only possible limit point.
\end{proof}
\begin{proof}[Proof of Theorem \ref{T1.1c}]
This will again follow from the same ideas. Fix a test function $\psi\ge 0$, $\int\psi = 1$ that is supported by $(-d,d)$.
We claim that we can find $\delta>0$ such that if $W\in\mathcal M_R$ satisfies $\int W\psi > -2\delta$ (recall that $W\le 0$, so
$\int W\psi\le 0$), then
\begin{equation}
\label{4.51}
\left| \int W(t)\varphi_j(t)\, dt \right| < \epsilon \quad\quad (j=1,\ldots, N) .
\end{equation}
This is a consequence of the compactness of $\mathcal M_R$: If our claim was wrong,
then we could find a sequence $W_n\to W$, $W_n,W\in\mathcal M_R$ so that $\int W_n\psi\to 0$, but \eqref{4.51} fails for all $W_n$.
But then $\int W\psi =0$, hence $W= 0$ on the support of $\psi$, hence $W\equiv 0$. Thus \eqref{4.51} could not fail for all $W_n$ in this situation.
Our claim was correct. We can and will also insist here that $\delta\le\epsilon$.

With this preparation out of the way, use compactness again to find an $x_0$ with the property that for each $x\ge x_0$,
there is a limit point $W\in\mathcal M_R$, which will depend on $x$, so that
\[
\left| \int (W(t)-V(x+t))\theta(t)\, dt \right| < \delta
\]
for the test functions $\theta=\psi$ and $\theta=\varphi_j$.
Now if $V\ge -\delta$ on $(x-d,x+d)$, then $\int V(x+t)\psi(t)\, dt \ge -\delta$, thus $\int W\psi > -2\delta$, so \eqref{4.51} applies and it follows that
\[
\left| \int V(x+t)\varphi_j(t)\, dt \right| < \delta + \epsilon \le 2\epsilon ,
\]
as desired.
\end{proof}
\section{Proof of Theorem \ref{T1.2}}
(a) Recall how we obtained the conditions on $F$ and $\sigma$ for a $J\in\mathcal M_R$ in the proof of Theorem \ref{Tdisc}:
Essentially, we had to make sure that the behavior of $F$ as $\lambda\to 0$ and $|\lambda|\to\infty$ is consistent with the known asymptotics
of $m_+(z)$ and $m_-(z)$, respectively, as $|z|\to\infty$. If we only want $m_+$ to be the $m$ function of a (half line) Jacobi matrix,
but not $m_-$, then we only need to make sure that the asymptotics of $F$ as $\lambda\to 0$ come out right.

To obtain such an example, let's just take $\sigma=\delta_1$, so
\begin{equation}
\label{6.9}
F(\lambda)= -1 + \frac{1}{1-\lambda} .
\end{equation}
As $F$ approaches a limit as $|\lambda|\to\infty$, this is clearly not the $F$ function of a whole line Jacobi matrix.
(So the point really was to choose a $\sigma$ with $\sigma_{-2}=1$, to destroy the required asymptotics at large $\lambda$.)
However, \eqref{6.9} will yield an $m$ function $m_+$ of a (positive) half line Jacobi matrix $J_+$ via \eqref{m++}.
This follows as in the proof of Theorem \ref{Tdisc}; notice that \eqref{3.1} was not used in this part of the argument.
Also, by construction, this $m_+$ will satisfy \eqref{defrefl} on $(-2,2)$, for the companion Herglotz function $m_-$ that is also extracted from $F$,
via \eqref{m--}.

So we have already proved Theorem \ref{T1.2}(a).
However, it is also interesting to work things out somewhat more explicitly.
We can find $m_+(z)$ most conveniently by using the material from Sect.~2. Notice that \eqref{3.5} becomes
\[
h(\lambda)=h_0(\lambda)\frac{1}{(1-\lambda)(1-1/\lambda)} ,
\]
hence
\begin{equation}
\label{H}
H(z) = \frac{H_0(z)}{z+2} ,
\end{equation}
where $H_0(z)=\sqrt{z^2-4}$ is the $H$ function of the free Jacobi matrix. Now \eqref{2.1}, specialized to the case at hand, says that
$m_+ = A_+ + (1/2)H$. Here we use the fact that the measure $\rho$ associated with $H$ is supported by $(-2,2)$, as we read off from
\eqref{H}; there is no point mass at $-2$ because $H_0$ contains the factor $(z+2)^{1/2}$.
We also know that $m_+(iy)\to 0$ as $y\to\infty$ (because $F(\lambda)\to 0$ as $\lambda\to 0$), and this implies that $A_+=-1/2$. Thus
\[
m_+(z) = \frac{1}{2} \left( \sqrt{\frac{z-2}{z+2}} - 1 \right) ;
\]
of course, the square root must be chosen so that $m_+$ becomes a Herglotz function.
With this explicit formula, we can confirm one more time that $m_+$ is the $m$ function of a Jacobi matrix $J_+$. The associated measure
can also be read off:
\[
d\rho_+(x) = \frac{1}{2\pi}\chi_{(-2,2)}(x) \sqrt{\frac{2-x}{2+x}}\, dx
\]
In particular, we can now confirm the additional claim that $\sigma(J_+)=[-2,2]$ that was made earlier, in Sect.~1.

It is instructive to obtain this example as a limit of measures $\sigma_{\epsilon} = (1-\epsilon)\delta_1$. For $\epsilon>0$ (and small),
these measures obey \eqref{3.1}, so are admissible in the sense of Theorem \ref{Tdisc}. The $F$ function is given by
\[
F_{\epsilon}(\lambda) = -1+\epsilon + \epsilon\lambda + \frac{1-\epsilon}{1-\lambda} .
\]
A similar analysis can be given. The associated
Jacobi matrices $J_{\epsilon}$ have an eigenvalue at $E_{\epsilon}=-1-1/\epsilon$ and no other spectrum outside $[-2,2]$; of course,
they are reflectionless on $(-2,2)$. (Operators in $\mathcal M_R$ with only discrete spectrum outside $[-2,2]$ are usually called \textit{solitons.})
So our example shows the following: There is a sequence of solitons $J_{\epsilon}$ so that the half line restrictions $(J_{\epsilon})_+$ converge,
in the strong operator topology, to our $J_+$ from above. The unrestricted whole line operators $J_{\epsilon}$ do not converge, of course;
their operator norms form an unbounded sequence. In fact, \eqref{a0} informs us that $a_0=\epsilon^{-1/2}$, so this is already divergent.

(b) This is very similar, but somewhat more tedious from a technical point of view. Since we already went through similar arguments
in the proof of Theorem \ref{Tcont}, we will be satisfied with a sketch. Let
\[
d\sigma(t) = \chi_{(1,\infty)}(t)e^{-t}\, dt ;
\]
as the discussion we are about to give will make clear, only certain general features of this measure matter, not its precise form.
Note that a compactly supported $\sigma$ can not produce an example of the desired type, as observed above, after Theorem \ref{Tcont}.
As in part (a), the basic idea is to leave the asymptotics of $m_+$ essentially untouched while seriously upsetting those of $m_-$.
Indeed, if we now define $F$ by \eqref{4.3} and then $m_{\pm}$ by \eqref{4.41a}, \eqref{4.41b} and extract the corresponding measures $\rho_{\pm}$,
then we find that
\begin{align*}
d\rho_+(x) & = d\rho_0(x) + \chi_{(0,\infty)}(x)f(x)\, dx \\
d\rho_-(x) & = d\rho_+(x) + \chi_{(-\infty,-1)}(x)e^{-|x|^{1/2}}\, dx ,
\end{align*}
with a density $f$ that again satisfies $f(x)=cx^{-1/2}+O(x^{-1})$ as $x\to\infty$. Exactly this situation was discussed in the proof of
Theorem \ref{Tcont}: such a $\rho_+$ satisfies conditions (1), (2) from the Gelfand-Levitan theory, and since also $m_+(z)=\sqrt{-z}+o(1)$
for large $|z|$, it follows that $m_+$ is the $m$ function of a half line Schr\"odinger operator $H_+$.
Notice also that $\rho_+$ is supported by $(0,\infty)$, so indeed $\sigma(H_+)=[0,\infty)$.

To finish the proof, we show that $\nu=\rho_--\rho_0$ does not satisfy condition (2). Now we just saw that $\rho_+-\rho_0$ does
define a locally integrable function via \eqref{4.11} (interpreted in distributional sense), so this will follow if we can show that
the formal expression
\begin{equation}
\label{5.1}
\int_{-\infty}^{-1} \frac{\sin t\sqrt{x}}{\sqrt{x}} e^{-|x|^{1/2}}\, dx
\end{equation}
does \textit{not }define a locally integrable function. In fact, it is almost immediate that with the interpretation given above,
\eqref{5.1} does not even define a distribution: Since $x^{-1/2}\sin tx^{1/2} = |x|^{-1/2}\sinh t|x|^{1/2}$ for $x<0$, it is clear
that \eqref{conv} diverges for any test function $g\ge 0$, $g\not\equiv 0$ whose support lies to the right of $1$.

\end{document}